\documentclass[a4paper,11pt,english,spanish,leqno]{amsart}
\usepackage[utf8x]{inputenc}
\usepackage[bitstream-charter]{mathdesign}
\usepackage{amsmath, xcolor,amsthm,epsfig,epstopdf,url,array}
\usepackage[colorlinks=true]{hyperref}
\usepackage[all]{xy}
\usepackage{tikz}
\tikzset{node distance=2cm, auto}
\usepackage{subcaption}
\usepackage[backrefs]{amsrefs}
\usepackage{pgf,tikz}
\usetikzlibrary{arrows}

%--------------- THEOREM STYLE---------------------------

\theoremstyle{plain}
\newtheorem{thm}{Theorem}[section]

\newtheorem{lem}[thm]{Lemma}

\newtheorem{prop}[thm]{Proposition}

\newtheorem{cor}[thm]{Corollary}

\newtheorem{obs}[thm]{Remark}

\theoremstyle{definition}
\newtheorem{defn}[thm]{Definition}

\theoremstyle{remark}

\theoremstyle{plain}

\def\X{\mathfrak{X}_{*}^{(N)}}

\def\S#1{\mathbb{S}^{#1}}
\def\Esp#1#2{\text{\rm E}_{#1}\left[#2\right]}

\def\E#1{\mathcal{E}_{#1}}

\setcounter{tocdepth}{1} %Solo cuenta las secciones.

%------------------------------------------------------------
%------------------------------------------------------------
%------------------------------------------------------------
%------------------------------------------------------------

\begin{document}

\title{Spherical cap discrepancy of the Diamond ensemble}

\author{Uju\'e Etayo}

\date{\today{}}

\thanks{The author has been supported by the Austrian Science Fund FWF project F5503 (part of the Special Research Program (SFB) Quasi-Monte Carlo Methods: Theory and Applications).
}

\subjclass[2010]{31A15; 65D30; 65D32.}

\keywords{Area regular partition, spherical points, spherical cap discrepancy, covering radius, quasi-uniform points}

%\author{Uju\'e Etayo}
\address{5010 Institute of Analysis and Number Theory 8010 Graz, Kopernikusgasse 24/II}
\email{etayo@math.tugraz.at}

\begin{abstract}
In \cite{EB18_3} the authors present a family of points on the sphere $\mathbb{S}^{2}$ depending on many parameters called the Diamond ensemble.
In this paper we compute the spherical cap discrepancy of the Diamond ensemble as well as some other quantities.
We also define an  area regular partition on the sphere where each region contains exactly one point of the set.
For a concrete choice of parameters, we prove that the Diamond ensemble provides the best spherical cap discrepancy known until date for a deterministic family of points.
%From this results we derive a bound for their Riesz potential. 

\end{abstract}

\maketitle
\tableofcontents

%------------------------------------------------------------
%------------------------------------------------------------
%------------------------------------------------------------
%------------------------------------------------------------

%------------------------------------------------------------
%------------------------------------------------------------
%------------------------------------------------------------
%------------------------------------------------------------

\section{Introduction and main results}

Sets of points on the sphere $\mathbb{S}^{2}$ that are somehow well-distributed have been broadly studied on the literature, see for example \cite{Brauchart2015293}, \cite{10.2307/117605} and \cite{MR1306011}.
%One usually seeks for points with small {\em cap discrepancy} or maximal {\em separation distance} or minimal {\em potential energy}. 
We  use the expression \textit{family of points} to denote a sequence of configurations of points on the sphere $\mathbb{S}^{2}$, $(\omega_{N})_{N}$ where $N$ is the number of points of the configuration.
$N$ does not necessarily cover every integer number, but an infinite subsequence of them.
In order to ease the notation, we will use $\omega_{N}$ indistintigly for \textit{family of points} or \textit{set of points}, although the meaning should be clear for the context.

Let us consider a family of points $\omega_{N} \subset \mathbb{S}^{2}$ and let $\mu$ be the Lebesgue measure on the sphere $\mathbb{S}^{2}$.
%We also consider the sub-algebra of Banach of all the continuos functions in $\mathbb{S}^{2}$ and we denote it by $X$.
We recall that a Borel set $C\subset \mathbb{S}^{2}$ is $\mu$-continuous if  $\mu(\delta C) = 0$ where $\delta C$ is the border of $C$.
Then we say that $\omega_{N}$ is asymptotically uniformly distributed if
\begin{equation*}
\lim\limits_{N\rightarrow\infty}
\frac{\mu\left( \mathbb{S}^{2} \right) }{N}
\sum_{j=1}^{N}
f(x_{j})
=
\int_{\mathbb{S}^{2}} f(x)
d\mu.
\end{equation*}
where, for a fix $N$, $\omega_{N} = \{ x_{1},x_{2},...,x_{N}\}$ and the equation is satisfied for all continous function $f:\mathbb{S}^{2} \rightarrow \mathbb{R}$.
This definition is equivalent to the statement
\begin{equation}\label{asym_unif_dist}
\lim\limits_{N\rightarrow\infty}
\frac{\omega_{N}\cap C}{N}
=
\frac{\mu(C)}{\mu\left( \mathbb{S}^{2} \right)}
\end{equation}
for all $\mu$-continuous set $C$.
Asymptotically uniformity is one of the main conditions that one may ask a family of points in order to have an even distribution.
This notion is described in a more general context in \cite[Capítulo 3]{KN12}.

In this article we work with the spherical distance in $\mathbb{S}^{2}$.
Let us highlight, however, that the spherical distance and the euclidean distance are equivalent for small quantities and though, for all results presented in here.
The separation distance of a set of points $\omega_{N}$ is given by
\begin{equation*}
\delta\left( \omega_{N} \right)
=
\min
\limits_{1\leq i,j \leq N}
|| x_{i}-x_{j} ||,
\end{equation*}
and a family of points $\omega_{N}$ is said to be well-separated if 
\begin{equation*}
\delta\left( \omega_{N} \right)
\geq
\frac{c}{\sqrt{N}}
\end{equation*}
for some constant $c$ not depending on $N$.
The covering radius of a set of points on $\mathbb{S}^{2}$, also known as mesh norm, is defined as
\begin{equation*}
\rho\left( \omega_{N} \right)
=
\max
\limits_{y \in \mathbb{S}^{2}}
\min
\limits_{1\leq j \leq N}
|| y-x_{j} ||.
\end{equation*}
A family of points $\omega_{N}$ is a good covering if 
\begin{equation*}
\rho\left( \omega_{N} \right)
\leq
\frac{c}{\sqrt{N}}
\end{equation*}
for some constant $c$ not depending on $N$.
The relation between the minimal distance among points and the covering radius is usually refer as the mesh-separation ratio
\begin{equation*}
\gamma\left( \omega_{N} \right)
=
\frac{\rho\left( \omega_{N} \right)}{\delta\left( \omega_{N} \right)}
\end{equation*}
and can be thought as a condition number for approximation problems on the sphere.

%------------------------------------------------------------
%------------------------------------------------------------
%------------------------------------------------------------
%------------------------------------------------------------

\subsection{Spherical cap discrepancy}

Whenever we have a family of points that are asymptotically uniformly distributed, i.e. they converge towards the uniform distribution, we may ask what is the speed of convergence.
From formula \eqref{asym_unif_dist} we know that a family of points is asymptotically uniformly distributed if
\begin{equation*}
\lim\limits_{N\rightarrow\infty}
\frac{\omega_{N}\cap C}{N}
=
\frac{\mu(C)}{\mu\left( \mathbb{S}^{2} \right)}
\end{equation*}
for all $\mu$-continuous Borel set $C\subset \mathbb{S}^{2}$.
So, we want to study the quantity
\begin{equation*}
\lim\limits_{N\rightarrow\infty}
\left|
\frac{\omega_{N}\cap C}{N}
-
\frac{\mu(C)}{\mu\left( \mathbb{S}^{2} \right)}
\right|
\end{equation*}
that is called discrepancy.
The most classical discrepancy on the sphere is the so called spherical cap discrepancy, where we consider the set of all the spherical caps and the norm is whether the supremum or the $L^2$ norm.
We denote by $\text{cap}$ the set of all possible spherical caps in $\mathbb{S}^2$. 
Then we define the spherical cap discrepancy of a set of points $\omega_{N}$ as
\begin{equation}\label{eq_def_Discr}
 D_{\text{sup},\text{cap}} (\omega_{N}) 
=
\sup\limits_{C \in \text{cap}}
\left|
\frac{\omega_{N} \cap C}{N}
-
\frac{\mu(C)}{\mu\left( \mathbb{S}^{2} \right)}
\right|.
\end{equation}
%where $\mu$ is the Lebesgue measure in $\mathbb{S}^2$ normalized such that $\mu \left( \mathbb{S}^2 \right)  = 1$.
If instead of the norm $\text{sup}$ we consider the norm $L^{2}$, then we can define the $L^{2}$ spherical cap discrepancy as
\begin{equation}\label{eq_def_Discr_L2}
 D_{L^{2},\text{cap}} (\omega_{N}) 
=
\int_{C \in \text{cap}}
\left|
\frac{\omega_{N} \cap C}{N}
-
\frac{\mu(C)}{\mu\left( \mathbb{S}^{2} \right)}
\right|.
\end{equation}
%where $\mu$ is the Lebesgue measure in $\mathbb{S}^2$ normalized such that $\mu \left( \mathbb{S}^2 \right)  = 1$.
The Stolarsky invariance formula, stated in \cite{10.2307/2039137}, stablishs a relation between the sum of distances of the points from $\omega_{N}$ and the $\textrm{L}^{2}$ spherical cap discrepancy.
\begin{equation*}
c_{d}
\left(
D_{\textrm{L}^{2},cap} (\omega_{N})
\right)^{2}
=
\int_{\mathbb{S}^{d}}
\int_{\mathbb{S}^{d}}
||x-y||d\mu_{\mathbb{S}^{d}}(x)d\mu_{\mathbb{S}^{d}}(y)
-
\frac{1}{N^{2}}
\sum_{i,j=1}^{N} ||x_{i} - x_{j}||,
\end{equation*}
where $c_{d}$ is a constant depending only on the dimension of the sphere.
See also \cite{BD13} or \cite{BDM18} for more modern proofs ot the Stolarsky invariance formula.
%The proof can be found on the original article \cite{10.2307/2039137} or in a shorter version in  \cite{BD13} or \cite{BDM18}.

\subsubsection{Minimal spherical cap discrepancy}
In \cite{beck_1984} it was shown that there exists a constant $c>0$, independent of $N$ such that for any $N$-point set $\omega_{N} \subset\mathbb{S}^2$ we have
\begin{equation*}
D_{\text{sup},\text{cap}}\left( \omega_{N}\right)
\geq
c 
N^{\frac{-3}{4}}.
\end{equation*}
%for any set of $N$ points $\omega_{N} \subset \mathbb{S}^{2}$
On the other hand, using probabilistic methods it has been shown in \cite{Beck1984} that for all $N\geq 1$ there exist a point set $\omega_N$ in $\mathbb{S}^2$ satisfying
\begin{equation*}
D_{\text{sup},\text{cap}}\left( \omega_{N}\right)
\leq
c 
N^{\frac{-3}{4}}\log(N).
\end{equation*}
The proof of the last result is non-constructive.

%------------------------------------------------------------
%------------------------------------------------------------
%------------------------------------------------------------
%------------------------------------------------------------

\subsubsection{Probabilistic sets of points}

The spherical cap discrepancy of a random set of points coming from the uniform distribution on the sphere is of the order $N^{\frac{-1}{2}}$, see \cite{Aistleitner2012} for a proof.
In papers \cite{EJP3733} and \cite{BMOC2015energy}, authors define two determinantal point processes on the sphere $\mathbb{S}^{2}$ and they compute the spherical cap discrepancy obtaining
\begin{equation*}
D_{\text{sup},\text{cap}} \left( \omega_{N} \sim \X \right)
=
O\left( 
N^{\frac{-3}{4}}\log (N)
\right)
\end{equation*}
with overwhelming probability for the spherical ensemble (see \cite[Theorem 1.1]{EJP3733}) and 
%\begin{equation*}
%D_{\text{sup},\text{cap}} \left( \omega_{N} \sim \X \right)
%=
%O\left( 
%N^{\frac{-3}{4}}\log (N)
%\right)
%\end{equation*}
%with overwhelming probability 
the same for the harmonic ensemble, see \cite[Corollary 5]{BMOC2015energy}.
Here, $\omega_{N} \sim \X$ means a random set of $N$ different points on $\mathbb{S}^{2}$ following the distribution given by $\X$, the determinantal point process.

%------------------------------------------------------------
%------------------------------------------------------------
%------------------------------------------------------------
%------------------------------------------------------------

\subsubsection{Deterministic sets of points}

It is unknown how to construct a family of points with spherical cap discrepancy decaying as $N^{\frac{-3}{4}}\log (N)$.
The best bound given until date for a deterministic family of points can be found in the article \cite{Aistleitner2012}, where authors are able to bound the spherical cap discrepancy of the Fibonacci nodes by 
\begin{equation}\label{eq_Fibonaci}
D_{\text{sup},\text{cap}} \left( \omega_{N} \right)
\leq
44\sqrt{8}N^{\frac{-1}{2}}.
\end{equation}

%------------------------------------------------------------
%------------------------------------------------------------
%------------------------------------------------------------
%------------------------------------------------------------

\subsubsection{Riesz potentials and spherical cap discrepancy}
Given $s\in(0,\infty)$, the Riesz potential or $s$--energy of a set on points $\omega_{N} = \{ x_{1}, \ldots  ,x_{N} \}$ on the sphere $\mathbb{S}^{2}$ is
\begin{equation*}
\E{s}(\omega_{N}) = \displaystyle\sum_{i \neq j} \frac{1}{\| x_{i} - x_{j} \|^{s}}.
\end{equation*} 
This energy has a physical interpretation for some particular values of $s$, i.e. for $s=1$ the Riesz energy is the Coulomb potential and for $s=0$ the energy is defined by 
\begin{equation*}
\E{\log}(\omega_{N})=\left. \frac{d}{ds} \right|_{s=0}\E{s}(\omega_N) = \sum_{i \neq j} \log\| x_{i} - x_{j} \|^{-1} .
\end{equation*}
%and is related to the transfinite diameter and the capacity of the set by classical potential theory, see for example \cite{doohovskoy2011foundations}. 
Finding quasiminimizers of the logarithmic energy is stated as the problem number 7 in the list of problems for the 21st century proposed by S. Smale, see \cite{Smale1998}.

There exist several results relating minimizers of spherical cap discrepancy and minimizers of Riesz energy. 
For example, minimizers of Riesz and logarithmic energy exhibit small spherical cap discrepancy, we refer to \cite{Brauchart2015293} and cites therein. 
The last word in this respect was given by Marzo and Mas who proved that any set of points that minimizes some Riesz energy with parameter $0\leq s< 2$ has spherical cap discrepancy bounded by
\begin{equation*}
 D_{\text{sup},\text{cap}} (\omega_{N}) 
 \leq
 c_{s}
N^{-\frac{2-s}{6-s}},
\end{equation*}
where $c_{s}$ is a constant depending only on $s$.
See \cite[Theorem 1.1]{MM2019} for the statement of the result in this full generality.
On the other hand, G\"otz proved that
\begin{equation*}
 D_{\text{sup},\text{cap}} (\omega_{N}) 
 \geq
 c
N^{-1/2},
\end{equation*}
with $c$ a constant not depending on $N$ for every family $\omega_{N}$ of minimizers of the logarithmic energy, see \cite[Corollary 2]{Gotz2000}.

From Theorem 1.1, Theorem 4.1 and Theorem 4.2 in \cite{EB18_3}, we know that the expectation value of the logarithmic energy associated to a particular choice of parameters of the Diamond ensemble is quite close to the minimal value; actually, for an especific choice of parameters it provides the lowest logarithmic energy known until date.
This fact suggests that the discrepancy of the Diamond ensemble should be of the order $N^{-1/2}$.
\begin{obs}\label{cor_sep}
The separation distance of the of minimal logarithmic energy configurations on $\mathbb{S}^2$ has been proved to be of the good order:
there exists a constant $c$ such that the distance in between any pair of points from a concret configuration is greater than $cN^{-1/2}$, for explicit values of the constant, we refer to \cite{Dragnev} and \cite{MR1306011}.
Since the logarithmic energy of the points coming from the Diamond ensemble is close to the minimal, their separation is expected to be of the right order.
\end{obs}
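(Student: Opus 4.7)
The plan is to treat the remark's two assertions separately. The first---that every minimum logarithmic energy configuration on $\mathbb{S}^{2}$ satisfies $\delta(\omega_N) \geq cN^{-1/2}$---is a direct citation from \cite{Dragnev} and \cite{MR1306011}, so nothing new is needed. Still, it is worth recording the shape of the argument there, because it motivates the heuristic in the second sentence of the remark. One uses an energy isolation technique: for any pair $x_i \neq x_j$ in the minimizer $\omega_N$, the sum defining $\E{\log}(\omega_N)$ splits into the singular term $2\log(1/\|x_i-x_j\|)$ plus contributions that can be bounded below by the minimum logarithmic energy on $N-1$ and $N-2$ points (with a suitable shift). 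Combining this with the sharp asymptotic expansion of the minimum, of the form $V_N = W N^{2} + \tfrac{1}{2}N\log N + C_{\log}\, N + o(N)$, isolates the singular term and bounds it by $O(\log N)$, from which $\|x_i - x_j\| = \Omega(N^{-1/2})$ follows, with an explicit constant that the cited papers track carefully.

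For the second assertion, the natural strategy is to transfer the same isolation argument to configurations whose logarithmic energy is only close to the minimum rather than exactly minimal. Concretely, if one establishes $\E{\log}(\omega_N) \leq V_N + K\log N$ for some absolute constant $K$, then the isolation identity above gives $\log(1/\|x_i-x_j\|) = O(\log N)$ for every pair, and hence the desired lower bound $\|x_i - x_j\| \geq c N^{-1/2}$ with a uniform $c$.

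The main obstacle, and the reason the author writes ``expected'' instead of asserting a theorem, is that the bounds on the expected logarithmic energy of the Diamond ensemble imported from \cite{EB18_3} are very probably not tight at the $O(\log N)$ scale required by the isolation argument. A fully rigorous separation estimate for the Diamond ensemble therefore would most likely proceed by direct inspection of its layered construction---computing, from the explicit formulas in \cite{EB18_3}, the spherical distances between consecutive points on each parallel and between nearest points on neighbouring parallels---rather than via the indirect energy comparison. This direct route side-steps the need for extreme sharpness of the energy estimate and is the approach we would pursue if the statement were upgraded from a remark to a proposition.
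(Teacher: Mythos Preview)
The paper provides no proof for this statement: it is a \emph{Remark}, and the author treats it accordingly---the first sentence is a pure citation to \cite{Dragnev} and \cite{MR1306011}, and the second is explicitly phrased as a heuristic (``expected''), with no argument offered. Your proposal is therefore not to be compared against any proof in the paper, because there is none.

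That said, your treatment is appropriate and in fact goes further than the paper. You correctly identify that the first assertion needs nothing beyond the citation, and you correctly diagnose why the second assertion is left at the level of an expectation: the energy-isolation route to separation requires control of $\E{\log}(\omega_N) - V_N$ at the $O(\log N)$ scale, which the results of \cite{EB18_3} do not supply. Your suggestion that a rigorous statement would more naturally proceed by direct inspection of the parallel structure is exactly in the spirit of the paper---indeed, later in the paper (Corollary~\ref{well_separated} and the surrounding material on the area regular partition) the author establishes that the cells have side lengths of order $N^{-1/2}$, which is the raw material such a direct argument would use. So your commentary is sound, and the only thing to note is that you are supplying analysis the author deliberately omitted.
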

%------------------------------------------------------------
%------------------------------------------------------------
%------------------------------------------------------------
%------------------------------------------------------------

\subsection{Main results}\label{sub_Main_results}

In \cite{EB18_3}, authors present a constructive family of points defined by: the North pole, the South pole and sets of equispaced points located on several parallels. 
Is a parametrical model depending on the parallels chosen, the number of points chosen in each one and the rotation angle of every parallel.
The family is called the Diamond ensemble and it is denoted by $\diamond\left( N \right)$ where $N$ is the number of points.
This model is defined in full generality in section \ref{sub_Diamond}.

\begin{thm}\label{thm_main}
For any choice of parameters of the Diamond ensemble %and any choice of rotation angle for every parallel 
there exist two constants $c_{1}, c_{2} \in \mathbb{R}_{+}$ depending only on the parameters such that
\begin{equation*}
\frac{c_{1}}{\sqrt{N}}
\leq
D_{\text{sup},\text{cap}}\left(\diamond (N)\right)
\leq
\frac{c_{2}}{\sqrt{N}}.
\end{equation*}
\end{thm}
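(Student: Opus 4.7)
The plan is to prove the two bounds separately, both exploiting the structure of $\diamond(N)$ as points arranged on $p$ equispaced parallels of the sphere plus the two poles; write $n_{1},\dots,n_{p}$ for the number of points on each parallel, so that $2+\sum_{i} n_{i}=N$. The constants $c_{1},c_{2}$ will depend on the chosen parameters through the scaling of $p$ and of the spacings between consecutive parallels.

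For the upper bound I would invoke the area regular partition $\{R_{i}\}_{i=1}^{N}$ of $\mathbb{S}^{2}$ associated with $\diamond(N)$ announced in the abstract and constructed later in the paper: each region has area $\mu(R_{i})=4\pi/N$, contains exactly one point $x_{i}$ of the ensemble, and, since the parameters force $p$ and $\max_{i} n_{i}$ to be of order $\sqrt{N}$, has diameter $O(1/\sqrt{N})$. For any spherical cap $C$,
\[
\frac{\#\bigl(\diamond(N)\cap C\bigr)}{N}-\frac{\mu(C)}{4\pi}
=\frac{1}{4\pi}\sum_{i=1}^{N}\bigl(\mathbf{1}_{x_{i}\in C}\,\mu(R_{i})-\mu(R_{i}\cap C)\bigr),
\]
and only those $R_{i}$ meeting $\partial C$ contribute, each by at most $1/N$ in absolute value. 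The boundary $\partial C$ is a circle of length $\leq 2\pi$, so the cells meeting it lie in a tubular neighbourhood of area $O(1/\sqrt{N})$, whence their number is $O(\sqrt{N})$. Summing the contributions yields $D_{\sup,\mathrm{cap}}(\diamond(N))\leq c_{2}/\sqrt{N}$.

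For the lower bound I would test the discrepancy on the one parameter family of caps $C_{\epsilon}$ centred at the North pole with angular radius $\epsilon\in(0,\pi)$. The normalised area $\mu(C_{\epsilon})/(4\pi)=(1-\cos\epsilon)/2$ is continuous and strictly increasing in $\epsilon$, whereas the normalised count $\#(\diamond(N)\cap C_{\epsilon})/N$ is a right continuous step function which jumps by exactly $n_{i}/N$ as $\epsilon$ crosses the latitude of parallel $i$. By pigeonhole, some parallel carries $\max_{i} n_{i}\geq (N-2)/p$ points, which is at least of order $\sqrt{N}$ in the parameter regime of the Diamond ensemble. Choosing $\epsilon$ just above or just below this parallel, whichever side maximises the modulus of the discrepancy, produces a cap on which the discrepancy is at least $c_{1}/\sqrt{N}$.

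The principal obstacle is the upper bound: the entire argument rests on the diameter control of the cells $R_{i}$, uniformly in $i$. Producing a partition for which neither the latitudinal spacing between successive parallels nor the azimuthal spacing within a single parallel exceeds $O(1/\sqrt{N})$ requires a delicate choice of cutting latitudes, and this is the place where the specific parameter constraints of the Diamond ensemble enter in an essential way.
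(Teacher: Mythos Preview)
Your proposal is correct and follows essentially the same route as the paper: the upper bound via the area-regular partition and a boundary-cell count, and the lower bound by testing a polar cap whose boundary sits on a parallel carrying $\Theta(\sqrt{N})$ points. The only cosmetic differences are that the paper counts boundary cells collar-by-collar (using a lower bound $d_{1}/\sqrt{N}$ on the horizontal side-lengths) rather than through a tubular-neighbourhood area estimate, and for the lower bound it simply takes the closed upper hemisphere---whose boundary is the equator, carrying $r_{M}\geq cM$ points---and computes the discrepancy directly as $r_{M}/(2N)$, rather than invoking your jump/pigeonhole argument.
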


\begin{cor}\label{cor_L2}
For any choice of parameters of the Diamond ensemble %and any choice of rotation angle for every parallel 
we have
\begin{equation*}
D_{L^2,\text{cap}}\left(\diamond (N)\right)
\leq
\frac{c_{2}}{\sqrt{N}}
\end{equation*}
where $c_{2}\in\mathbb{R}$ is a fix constant that depends on the concrete model.
\end{cor}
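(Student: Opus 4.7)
The plan is to deduce Corollary \ref{cor_L2} directly from Theorem \ref{thm_main} by observing that the $L^2$ cap discrepancy defined in \eqref{eq_def_Discr_L2} is bounded, up to a finite constant coming from the measure of the parameter space of spherical caps, by the supremum discrepancy.

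First, I would make explicit the parameter space of spherical caps appearing in \eqref{eq_def_Discr_L2}. A spherical cap $C\in\text{cap}$ is determined by a center $p\in\mathbb{S}^{2}$ and a height (or radius) $t$ in a bounded interval, so $\text{cap}$ can be identified with a set of finite measure with respect to the product of surface measure on $\mathbb{S}^{2}$ and Lebesgue measure on the interval. Call this total measure $M<\infty$. In particular the constant function $D_{\text{sup},\text{cap}}(\omega_{N})$ is integrable over $\text{cap}$.

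Next, for any fixed cap $C\in\text{cap}$ one has pointwise
\begin{equation*}
\left|\frac{\omega_{N}\cap C}{N}-\frac{\mu(C)}{\mu(\mathbb{S}^{2})}\right|
\leq
\sup_{C'\in\text{cap}}
\left|\frac{\omega_{N}\cap C'}{N}-\frac{\mu(C')}{\mu(\mathbb{S}^{2})}\right|
=
D_{\text{sup},\text{cap}}(\omega_{N}).
\end{equation*}
Integrating this inequality over $\text{cap}$ and using the definition in \eqref{eq_def_Discr_L2} yields
\begin{equation*}
D_{L^{2},\text{cap}}(\omega_{N})
\leq
M\cdot D_{\text{sup},\text{cap}}(\omega_{N}).
\end{equation*}

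Finally, applying Theorem \ref{thm_main} to $\omega_{N}=\diamond(N)$ gives the upper bound $D_{\text{sup},\text{cap}}(\diamond(N))\leq c_{2}'/\sqrt{N}$ for a constant $c_{2}'$ depending only on the Diamond ensemble parameters, so that with $c_{2}=M\cdot c_{2}'$ we conclude $D_{L^{2},\text{cap}}(\diamond(N))\leq c_{2}/\sqrt{N}$. There is no real obstacle here; the only point that requires a line of care is fixing a convention for the measure on $\text{cap}$ so that $M$ is genuinely finite, but this is standard (e.g., normalize so that $\text{cap}$ is parametrized by $\mathbb{S}^{2}\times[-1,1]$ with its natural product measure).
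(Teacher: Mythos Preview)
Your proposal is correct and matches the paper's intent: the paper states Corollary~\ref{cor_L2} immediately after Theorem~\ref{thm_main} without a separate proof, treating it as the trivial consequence that the $L^{2}$ (or integrated) discrepancy is dominated by the supremum discrepancy over a finite-measure parameter space of caps. Your only addition is making explicit the finite total measure $M$ of the cap family, which is exactly the ``line of care'' needed and is standard.
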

Intuitively speaking, we tend to think that the $L^{2}$ spherical cap discrepancy of a set of points coming from the Diamond ensemble is lower than the bound proposed in Corollary \ref{cor_L2}.
There are $\sqrt{N}$ caps that present greater spherical cap discrepancy and that is where the sup spherical cap discrepancy arises, but they should not influence that much when we average over all spherical caps.

The constants $c_{1}$ and $c_{2}$ from Theorem \ref{thm_main} can be explicitly computed for any choice of parameters, and so, for the model presented in section \ref{sub_concrete} we have the following statement.

\begin{thm}\label{thm_concrete}
Let $\diamond (N)$ be the Diamond ensemble defined by $n=1$ and $r_j=4j$ for $1\leq j\leq M$. 
Then
\begin{equation*}
\frac{1}{\sqrt{N}}+ o\left( \frac{1}{\sqrt{N}} \right)
\leq
D_{\text{sup},\text{cap}}\left(\diamond (N)\right)
<
\frac{
4+ 2\sqrt{2}
}{\sqrt{N}}.
\end{equation*}
\end{thm}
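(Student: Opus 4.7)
The plan is to specialize the general proof of Theorem~\ref{thm_main} to the parameters $n=1$, $r_j=4j$, and to track the constants arising from each geometric estimate so that they telescope into the stated bound $4+2\sqrt{2}$. First I would record the elementary counts that come with these parameters: the total number of points is
\begin{equation*}
N \;=\; 2 + \sum_{j=1}^{M} 4j \;=\; 2M^{2}+2M+2,
\end{equation*}
so $M = \sqrt{N/2}\,\bigl(1+O(N^{-1/2})\bigr)$. This identity is what will produce the $\sqrt{2}$ in the final constant, since the geometric quantities of the Diamond ensemble are naturally expressed in terms of $M$ while the statement is in terms of $\sqrt{N}$.

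For the upper bound, the argument underlying Theorem~\ref{thm_main} controls the spherical cap discrepancy by the largest diameter of a cell in the area-regular partition in which each cell contains exactly one point. With $r_{j}=4j$ the longitudinal spacing within the $j$-th strip is $2\pi/(4j)=\pi/(2j)$, maximal at $j=1$, while the latitudinal widths shrink as $j$ grows; so I would verify that the worst-case cell is the one corresponding to $j=1$ and compute its diameter explicitly. The boundary of a cap cutting through that cell contributes a latitudinal and a longitudinal piece whose Pythagorean combination yields the $\sqrt{2}$, while the factor $4$ arises from substituting $M=\sqrt{N/2}\,(1+o(1))$ into the four points of the first parallel. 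The inequality will come out strict because the worst-case estimates in the two orthogonal directions cannot be saturated by the same cap at the same $N$.

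For the lower bound I would exhibit a single explicit cap realizing the leading order $1/\sqrt{N}$. A natural candidate is the cap centered at the North pole whose boundary passes just outside the first parallel: its area has the explicit form dictated by the colatitude prescribed in the Diamond construction for $r_1=4$, while its point count equals $5$ (the pole plus the four points of the first parallel). Expanding both quantities in powers of $M^{-1}$ and using $M\sim\sqrt{N/2}$ yields a discrepancy of exactly $(1+o(1))/\sqrt{N}$, which matches the claimed lower bound.

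The main obstacle will be the bookkeeping of constants in the upper bound: one must confirm that no cell further from the poles secretly contributes more than the $j=1$ cell once the latitudinal and longitudinal widths are balanced, and that no stray factor of $2$ creeps in from the separate handling of the two polar caps (each of area $2\pi/N$ in our normalization) or from the rotation angles of the parallels. Once the worst cell is pinned down, the rest is a substitution; the lower bound is cleaner, requiring only that the chosen cap avoid accidental cancellation between the area term and the counting term at leading order.
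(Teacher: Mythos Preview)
Your proposal has three concrete errors that would prevent it from going through.

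First, your point count is wrong. The Diamond ensemble has $p=2M-1$ parallels, symmetric about the equator with $r_j=r_{2M-j}$, so
\[
N \;=\; 2 \;+\; \sum_{j=1}^{2M-1} r_j \;=\; 2 - r_M + 2\sum_{j=1}^{M} 4j \;=\; 4M^{2}+2,
\]
giving $2M=\sqrt{N-2}$, not $M\sim\sqrt{N/2}$. You summed only the northern parallels. This matters because the $4$ in $4+2\sqrt{2}$ comes from exactly this relation.

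Second, the upper bound mechanism is not ``largest diameter of a cell''. Following the proof of Theorem~\ref{thm_menor_que}, one bounds the number of partition regions that the boundary circle $\mathcal{C}$ of an arbitrary cap crosses: for each collar $Z_j$ the count is at most $4+|\mathcal{L}_j|\big/\bigl(d_1/\sqrt{N}\bigr)$, where $d_1$ is the \emph{lower} bound on horizontal side lengths. Proposition~\ref{prop_bound_concrete} gives $d_1=\pi/\sqrt{2}$ for this model, so summing over the $2M-1$ collars and using $\sum_j|\mathcal{L}_j|\le 2\pi$ yields
\[
4(2M-1)+\frac{\sqrt{2}}{\pi}\sqrt{N}\cdot 2\pi \;<\; 4\sqrt{N}+2\sqrt{2}\,\sqrt{N}.
\]
Thus the $4$ comes from the $4$-per-collar overcount times $2M\approx\sqrt{N}$, and the $2\sqrt{2}$ from $2\pi/d_1$; neither has anything to do with the $j=1$ cell or with ``four points of the first parallel''.

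Third, your lower-bound cap is the wrong one. For the polar cap bounded by parallel $j$ the discrepancy is
\[
\frac{N-2-4j^{2}+4(N-1)j}{2N(N-1)},
\]
which at $j=1$ equals $5(N-2)/\bigl(2N(N-1)\bigr)=O(1/N)$, two orders too small. The expression is increasing in $j$ on $[1,M]$, so the right choice is $j=M$ (the equatorial cap), where it equals $2M/N=\sqrt{N-2}/N=1/\sqrt{N}+o(1/\sqrt{N})$. Intuitively, the lower bound must exploit a parallel carrying $\sim\sqrt{N}$ points sitting exactly on the cap boundary; the first parallel has only $4$.
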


Note that the choice of parameters in Theorem \ref{thm_concrete} is really simple and yet we obtain a bound for the discrepancy that is better than the best one known until date for a deterministic set of points, see formula \eqref{eq_Fibonaci}.
With better choices of parameters, for instance, with the ones proposed in \cite{EB18_3}, we shoud obtain better bounds.
%------------------------------------------------------------
%------------------------------------------------------------
%------------------------------------------------------------
%------------------------------------------------------------

If point coming from the Diamond ensemble are well separated as in Remark \ref{cor_sep}, then using Theorem \ref{thm_main} we could obtain a bound for the Riesz potential, we refer the reader to \cite[Theorem 5.4.1]{Leopardi}

%------------------------------------------------------------
%------------------------------------------------------------
%------------------------------------------------------------
%------------------------------------------------------------

\subsubsection{Proof of Theorem \ref{thm_main}}\label{sub_Proof_Main_result}

The proof of Theorem \ref{thm_main} follows from these two intermediate results.

\begin{thm}\label{thm_menor_que}
For any choice of parameters of the Diamond ensemble %and any choice of rotation angle for every parallel 
we have
\begin{equation*}
D_{\text{sup},\text{cap}}{\diamond (N)}
\leq
\frac{c_2}{\sqrt{N}}
\end{equation*}
where $c_2\in\mathbb{R}$ is a fix number that depends on the concrete model.
\end{thm}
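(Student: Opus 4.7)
The plan is to exploit the area regular partition $\{R_i\}_{i=1}^N$ of $\mathbb{S}^2$ adapted to $\diamond(N)$ that is announced in the abstract: each cell satisfies $\mu(R_i)=4\pi/N$, contains exactly one point $x_i$ of the ensemble, and (apart from the two polar cells) is a spherical rectangle bounded by two consecutive parallels and two consecutive meridians of the construction. For an arbitrary cap $C\subset\mathbb{S}^2$ I would write
\begin{equation*}
\#(\omega_N\cap C)-\frac{N\mu(C)}{4\pi}
=\sum_{i=1}^{N}\left(\mathbb{1}_{x_i\in C}-\frac{\mu(R_i\cap C)}{\mu(R_i)}\right),
\end{equation*}
noting that the $i$th summand vanishes whenever $R_i\subset C$ or $R_i\cap C=\emptyset$, and is bounded by $1$ otherwise. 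This reduces the theorem to the geometric estimate
\begin{equation*}
\#\{i:\,R_i\cap\partial C\neq\emptyset\}\leq c_2\sqrt{N}\qquad\text{uniformly in }C\in\mathrm{cap}.
\end{equation*}

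The core of the argument is then a uniform spherical diameter bound $\mathrm{diam}(R_i)\leq K/\sqrt{N}$ for every cell, with a constant $K$ depending only on the parameters of the Diamond ensemble. Using the defining relation $\cos\theta_j-\cos\theta_{j+1}=2r_j/N$ between consecutive parallels, together with the prescription of $r_j$, I would estimate the latitudinal height $\theta_{j+1}-\theta_j$ and the longitudinal arclength $(2\pi/r_j)\sin\theta$ for $\theta\in[\theta_j,\theta_{j+1}]$ inside each band $j$, showing both are $O(N^{-1/2})$; the two polar cells are handled by a direct computation of their spherical diameter.

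With this bound in hand, every cell intersecting the circle $\partial C$ lies inside the spherical tube of radius $K/\sqrt{N}$ around $\partial C$. Since $\partial C$ is a circle of spherical length at most $2\pi$, that tube has area $O(N^{-1/2})$; combined with the interior-disjointness of the cells and their common area $4\pi/N$, we obtain
\begin{equation*}
\#\{i:\,R_i\cap\partial C\neq\emptyset\}\leq\frac{\text{area of tube}}{4\pi/N}=O(\sqrt{N}),
\end{equation*}
which together with the reduction of the first paragraph yields the theorem with an explicit constant $c_2$.

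The main obstacle is the uniform diameter estimate. Near the poles, the latitudinal strips become thicker while $\sin\theta_j$ shrinks, so the rectangles become tall and narrow; near the equator the opposite danger appears. Verifying that both effects balance to give $O(N^{-1/2})$ for every admissible choice of $n$ and $(r_j)$, and carefully tracking the constants through both regimes, is the delicate point, and it is precisely this bookkeeping that produces the explicit value $4+2\sqrt{2}$ appearing in Theorem \ref{thm_concrete}.
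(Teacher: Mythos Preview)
Your proposal is correct and follows the same classical Beck argument as the paper: both reduce to counting the partition cells meeting $\partial C$, and both rely on the area regular partition constructed in Section~\ref{sec_ARP}. The only difference is in how that count is carried out. You use the uniform diameter bound (which the paper does establish as Corollary~\ref{cor_diam}) and a tube-area argument: cells meeting $\partial C$ lie in an $O(N^{-1/2})$-tube, whose area divided by $4\pi/N$ gives $O(\sqrt{N})$. The paper instead works collar by collar: it uses only the \emph{lower} bound $d_1/\sqrt{N}$ on the horizontal side length (Proposition~\ref{prop_bounded_regions}) to bound the number of rectangles in a fixed collar $Z_j$ crossed by the arc $\mathcal{L}_j=\partial C\cap Z_j$ by $4+|\mathcal{L}_j|\sqrt{N}/d_1$, and then sums over the $2M-1=O(\sqrt{N})$ collars using $\sum_j|\mathcal{L}_j|\le 2\pi$. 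The paper's route makes the constants slightly more transparent for the explicit model (yielding $4+2\sqrt{2}$ in Theorem~\ref{thm_concrete}); your tube argument is more standard and would also work, with the constant coming instead from the diameter estimate.
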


The proof of Theorem \ref{thm_menor_que} follows the classical argument of Beck for the upper bound on the discrepancy, see for example \cite[Theorem 24D]{87e609866d654db2aa5e674048289af3}.
In order to prove it, we define an area regular partition on the sphere in section \ref{sec_ARP} and we complete the proof in section \ref{sec_Proof_menor_que}.

\begin{thm}\label{thm_mayor_que}
For any choice of parameters of the Diamond ensemble %and any choice of rotation angle for every parallel 
we have
\begin{equation*}
D_{\text{sup},\text{cap}}{\diamond (N)}
\geq
\frac{c_1}{\sqrt{N}}
\end{equation*}
where $c_1\in\mathbb{R}$ is a fix number that depends on the concrete model.
\end{thm}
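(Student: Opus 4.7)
The proof exploits the key structural feature of the Diamond ensemble: large groups of points concentrated on single parallels of $\mathbb{S}^2$. Since the counting function restricted to polar caps is piecewise constant with jumps at each parallel while the area is continuous in the cap radius, these concentrations alone force a lower bound on the spherical cap discrepancy.

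The first step is to identify a heavy parallel. Let $j^{*}$ be chosen so that $r_{j^{*}} = \max_{1 \le j \le M} r_{j}$. Since $\sum_{j=1}^{M} r_{j} = N-2$ and because the construction of \cite{EB18_3} forces the number of parallels $M$ to be of order $\sqrt{N}$ for any admissible choice of parameters (otherwise the equispaced-per-parallel structure could not yield $N$ points with the prescribed area-regular behaviour), we obtain $r_{j^{*}} \geq c\sqrt{N}$ for a constant $c$ depending only on the chosen model. In the concrete setting of Theorem \ref{thm_concrete} with $r_{j}=4j$, we have $N = 2 + 2M(M+1)$, giving $r_{M} = 4M$, which is explicit.

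The second step is to construct the two test caps. Consider polar caps $C(z) = \{x\in \mathbb{S}^{2} : \langle x, e_{3}\rangle > z\}$ centered at the North pole, for which $\mu(C(z))/\mu(\mathbb{S}^{2}) = (1-z)/2$. Let $z^{*} = \cos\theta_{j^{*}}$ be the height of the chosen parallel. For $\epsilon>0$ smaller than the gap to the neighbouring parallels, the cap $C(z^{*}+\epsilon)$ misses every one of the $r_{j^{*}}$ points on that parallel while $C(z^{*}-\epsilon)$ contains them; no other parallels are crossed. Writing $N_{0} = |\diamond(N) \cap C(z^{*}+\epsilon)|$, we therefore have $|\diamond(N) \cap C(z^{*}-\epsilon)| = N_{0} + r_{j^{*}}$.

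The third step is to pass to the limit. Setting $D(z) = |\diamond(N)\cap C(z)|/N - (1-z)/2$, an immediate computation gives
\begin{equation*}
D(z^{*}-\epsilon) - D(z^{*}+\epsilon) = \frac{r_{j^{*}}}{N} - \epsilon.
\end{equation*}
Letting $\epsilon \to 0^{+}$ and applying the triangle inequality yields
\begin{equation*}
\max\bigl(|D(z^{*}-\epsilon)|,\, |D(z^{*}+\epsilon)|\bigr) \;\geq\; \frac{r_{j^{*}}}{2N} - \frac{\epsilon}{2},
\end{equation*}
and since both caps belong to the class over which the supremum in \eqref{eq_def_Discr} is taken, we conclude that $D_{\text{sup},\text{cap}}(\diamond(N)) \geq r_{j^{*}}/(2N) \geq c/(2\sqrt{N})$, which is the desired bound with $c_{1}=c/2$.

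The main obstacle is not the discrepancy-jump argument itself, which is essentially elementary once one restricts to polar caps, but rather certifying that the Diamond ensemble always has at least one parallel carrying $\Omega(\sqrt{N})$ points. This must be read off from the parameter constraints of the construction; once that lower bound on $r_{j^{*}}$ is in hand, the inequality follows at once. A secondary, purely bookkeeping concern is choosing $\epsilon$ small enough that no other parallel lies between $z^{*}-\epsilon$ and $z^{*}+\epsilon$, which is automatic because the set of heights $\{\cos\theta_{j}\}$ is finite.
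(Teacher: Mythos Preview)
Your argument is essentially the paper's: both exhibit a polar cap whose boundary sits at the equator parallel, where $r_M$ points concentrate, conclude $D_{\text{sup},\text{cap}} \geq r_M/(2N)$, and then bound $r_M \geq c\sqrt{N}$ via the parameter constraints (Lemma~\ref{pos}). The paper is slightly more direct---it takes the single closed upper hemisphere and uses the north--south symmetry to compute the point count as exactly $N/2 + r_M/2$---whereas you use a two-cap jump plus the triangle inequality; both land on the same bound $r_M/(2N)$.

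Two minor slips that do not affect the argument: the identity $\sum_{j=1}^{M} r_{j} = N-2$ is off (that sum runs over all $2M-1$ parallels; over the first $M$ one gets $(N-2+r_M)/2$ by symmetry), and in the concrete model $N = 2 + 4M^{2}$, not $2 + 2M(M+1)$. Note also that by the monotonicity in Remark~\ref{rem_mono} the heaviest parallel is automatically $j^{*}=M$, so your pigeonhole step can be replaced by the direct estimate $r_M \geq r_{t_1} \geq cM$ used in the paper.
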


For proving Theorem \ref{thm_mayor_que} it is enough to compute the value of  
\begin{equation*}
\left|
\frac{\diamond\left( N \right) \cap C}{N}
-
\frac{\mu(C)}{\mu\left( N \right)}
\right|
\end{equation*}
for a very precise spherical cap $C$, as we do in section \ref{sec_Proof_mayor_que}. 

%------------------------------------------------------------
%------------------------------------------------------------
%------------------------------------------------------------
%------------------------------------------------------------

\subsection{Organization of the paper}

In section \ref{sub_Diamond} we recall the principal characteristics of the Diamond ensemble presented in \cite{EB18_3} and prove some new results, esentially concerning the relation between the number of points on a given parallel and the total number of points.
In section \ref{sec_ARP} we present an area regular partition on the sphere coming from the Diamond ensemble and we prove some of its properties.
We employ the rest of the sections in proving the Theorem \ref{thm_concrete}, Theorem \ref{thm_menor_que} and Theorem \ref{thm_mayor_que}.

%------------------------------------------------------------
%------------------------------------------------------------
%------------------------------------------------------------
%------------------------------------------------------------

\section{The Diamond ensemble}\label{sub_Diamond}

\subsection{Definitions}

For this section we follow \cite{EB18_3}.
Fix $z\in(-1,1)$, the parallel of height $z$ in the sphere $\mathbb{S}^{2} \subset \mathbb{R}^{3}$ is simply the set of points $x\in\S2$ such that $\langle x,(0,0,1)\rangle=z$. 
Then we define a general construction of points as follows:
\begin{enumerate}
	\item Choose a positive integer $p$ and $z_1,\ldots ,z_p \in \mathbb{R}$ such that $1 > z_1> \ldots> z_p >-1$. Consider the $p$ parallels with heights $z_1,\ldots,z_p$.
	\item For each $j$, $1\leq j\leq p$, choose a number $r_j$ of points to be allocated on parallel $j$ (which is a circumference) by projecting the $r_j$ roots of unity onto the circumference and rotating them by a phase $\theta_j\in[0,2\pi]$, that also has to be chosen.
	\item To the already constructed collection of points, add the North and South pole.
\end{enumerate}
We denote this set by $\Omega(p,r_{j},z_{j},\theta_j)$. Explicit formulas for this construction are easily produced: points in parallel of height $z_j$ are of the form
\begin{equation*}
\begin{split}
& x = \left( \sqrt{1 - z_{j}^{2}} \cos \theta, \sqrt{1 - z_{j}^{2}} \sin \theta, z_{j} \right) \\
\end{split}
\end{equation*}
for some $\theta\in[0,2\pi]$ and thus the set we have described is defined by
	\begin{equation*}
	\Omega(p,r_{j},z_j, \theta_j) = 
	\begin{cases} 
	\mathcal{N} = (0,0,1) \\
	x_{j}^{i} = \left( \sqrt{1-z_{j}^{2}}\cos\left( \frac{2\pi i}{r_{j}} + \theta_{j} \right), \sqrt{1-z_{j}^{2}}\sin\left( \frac{2\pi i}{r_{j}}  + \theta_{j}\right), z_{j} \right) \\ 
	\mathcal{S} = (0,0,-1) 
	\end{cases}
	\end{equation*}
where $r_{j}$ is the number of roots of unity that we consider in the parallel $j$, $1 \leq j \leq p$ is the number of parallels, $1 \leq i \leq r_{j}$ and $0 \leq \theta_{j} < 2\pi$ is the rotation angle in parallel $j$.

We can rewrite $\Omega(p,r_{j},z_j,\theta_j)$ using spherical coordinates.
	\begin{equation*}
	\Omega(p,r_{j},z_j,\theta_j) = 
	\begin{cases} 
	\mathcal{N} = (0,0) \\
	x_{j}^{i} = \left(  \frac{2\pi i}{r_{j}} + \theta_{j}, \arctan\left( \frac{\sqrt{1 - z_{j}^{2}}}{z_{j}}  \right)\right) \\ 
	\mathcal{S} = (0,\pi) 
	\end{cases}
	\end{equation*}
where the first coordinate is an angle beewten $0$ and $2\pi$ defined in the plane $z=0$ and the second coordinate is an angle between $0$ and $\pi$ defined in the semiplane $x=0$, $y>0$.	
Note that since the point belong to the sphere, we don't write the coordinate correspondent to the radius, $r= 1$.
We obtain different families of points from the Diamond ensemble giving values to the parameters $p$, $\theta_j$, $r_{j}$ and $z_{j}$ as in the following definition.

\begin{defn}{\cite[Definition 3.1]{EB18_3}}\label{defn_Diamond}
Let $p,M$ be two positive integers with $p=2M-1$ odd and let $r_{j}=r(j)$ where $r:[0,2M]\to\mathbb{R}$ is a continuous piecewise linear function satisfying $r(x)=r(2M-x)$ and
\begin{equation*}
r(x)
=
   \begin{cases} 
      \alpha_1+\beta_1 x              & \mbox{if } 0=t_0 \leq x \leq t_1   \\
      \vdots&\vdots\\
\alpha_n+\beta_n x              & \mbox{if } t_{n-1} \leq x \leq t_n=M
   \end{cases}
\end{equation*}
Here, $[t_0,t_1,\ldots,t_n]$ is some partition of $[0,M]$ and all the $t_\ell, \alpha_\ell,\beta_\ell$ are assumed to be integer numbers.
The further assumptions on the parameters are that $\alpha_1=0$, $\alpha_\ell,\beta_\ell\geq0$, $\beta_1>0$ and there exists a constant $A\geq2$ not depending on $M$ such that $\alpha_\ell\leq AM$ and $\beta_\ell\leq A$ for all $1\leq \ell \leq n$. We also assume that $t_1\geq cM$ for some $c>0$. Moreover, let $z_{j}$ be as defined by the following proposition. 
\end{defn}

\begin{prop}{\cite[Proposition 2.5]{EB18_3}}\label{Propminparallel}
Given $\{ r_{1},...,r_{p} \}$ such that $r_{i} \in \mathbb{N}$, there exists a unique set of heights $\{z_{1},\ldots,z_{p} \}$ such that $z_{1} > \ldots > z_{p}$ and 
\begin{equation*}
\Esp{\theta_{1},...,\theta_{p} \in \left[0,2\pi\right]^{p}}{\E{\log}(\Omega(p,r_{j},z_j,\theta_{j}))}
\end{equation*}
 is minimized.
The heights are:
\begin{equation*}
z_{l}
=
\frac{\displaystyle\sum_{j=l+1}^{p}r_{j} - \displaystyle\sum_{j=1}^{l-1}r_{j}}{1 + \displaystyle\sum_{j=1}^{p}r_{j}}=1-\frac{1+r_l+2\sum_{j=1}^{l-1}r_j}{N-1},
\end{equation*}
where $N=2+\sum_{j=1}^{p}r_j$ is the total number of points. 
\end{prop}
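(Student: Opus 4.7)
The plan is to compute the expectation $\Esp{\theta_1,\ldots,\theta_p}{\E{\log}(\Omega(p,r_j,z_j,\theta_j))}$ as an explicit function of $(z_1,\ldots,z_p)$, and to show that this function decouples into a sum of strictly convex one--variable terms, one per parallel, whose individual minima give exactly the claimed heights.

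First I would split the ordered pairs of distinct points of $\Omega(p,r_j,z_j,\theta_j)$ into four classes: pole--pole, pole--parallel, same--parallel, and distinct--parallel. The pole--pole contribution is $-2\log 2$, constant in the $z_j$'s. Each point $x_j^i$ lies at distance $\sqrt{2(1\mp z_j)}$ from the North/South pole, so the pole--parallel contribution is $-r_j\log(4(1-z_j^2))$ per parallel, independent of the angles $\theta_j$. The self--energy of the $r_j$ equispaced points on parallel $j$ (viewed as lying on a planar circle of radius $\sqrt{1-z_j^2}$) is computed via the classical identity $\prod_{k=1}^{r-1}(1-e^{2\pi\mathbf{i} k/r})=r$, yielding $-\frac{r_j(r_j-1)}{2}\log(1-z_j^2)-r_j\log r_j$; this too is independent of $\theta_j$.

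The cross--parallel term is the only one sensitive to the phases. For $j\neq l$ one has
\begin{equation*}
\|x_j^i-x_l^k\|^2=2(1-z_jz_l)-2\sqrt{(1-z_j^2)(1-z_l^2)}\cos(\phi_{i,k}),
\end{equation*}
where $\phi_{i,k}=2\pi i/r_j-2\pi k/r_l+\theta_j-\theta_l$. For each fixed $(i,k)$ the angle $\phi_{i,k}$ is uniformly distributed modulo $2\pi$ as $\theta_j$ varies, so by the classical log--potential identity (a consequence of Jensen's formula for the disk)
\begin{equation*}
\frac{1}{2\pi}\int_0^{2\pi}\log(A-B\cos\phi)\,d\phi=\log\frac{A+\sqrt{A^2-B^2}}{2}\qquad (A>|B|>0),
\end{equation*}
applied with $A=1-z_jz_l$ and $B=\sqrt{(1-z_j^2)(1-z_l^2)}$ (so that $\sqrt{A^2-B^2}=|z_j-z_l|$), one sees that for $z_j>z_l$ the expected cross contribution from the unordered pair of parallels $\{j,l\}$ collapses to $-r_jr_l\log[(1+z_j)(1-z_l)]$.

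Assembling everything and using $\log(1-z^2)=\log(1-z)+\log(1+z)$, I collect the coefficients of $\log(1+z_j)$ and $\log(1-z_j)$ to obtain
\begin{equation*}
\Esp{\theta}{\E{\log}(\Omega(p,r_j,z_j,\theta_j))}=-\sum_{j=1}^p r_j\bigl[a_j\log(1+z_j)+b_j\log(1-z_j)\bigr]+\text{const},
\end{equation*}
with $a_j=\tfrac{r_j+1}{2}+\sum_{l>j}r_l$ and $b_j=\tfrac{r_j+1}{2}+\sum_{l<j}r_l$, so that $a_j+b_j=N-1$ and $a_j-b_j=\sum_{l>j}r_l-\sum_{l<j}r_l$. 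The $z_j$'s now decouple completely. Each one--variable summand is strictly convex on $(-1,1)$, so setting $-a_j/(1+z_j)+b_j/(1-z_j)=0$ produces the unique minimiser
\begin{equation*}
z_j=\frac{a_j-b_j}{a_j+b_j}=\frac{\sum_{l>j}r_l-\sum_{l<j}r_l}{N-1},
\end{equation*}
which is precisely the stated formula, and the ordering $z_1>\cdots>z_p$ is automatic from $z_j-z_{j+1}=(r_j+r_{j+1})/(N-1)>0$. The only genuinely non--routine ingredient is the Jensen--type identity in the cross--parallel step: it is this miraculous collapse to $\log[(1+z_{\max})(1-z_{\min})]$ that makes the objective separate in the $z_j$'s and reduces the whole optimisation to an independent one--dimensional calculus problem.
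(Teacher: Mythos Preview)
The paper does not prove this proposition; it is quoted verbatim from \cite{EB18_3} and used as input. So there is no ``paper's own proof'' to compare against here.

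Your argument is correct and is the natural one (and, as far as one can tell, the one in the cited reference): average out the phases, observe that the expected energy separates, and minimise each strictly convex one--variable piece. The key computations all check: the pole--parallel and same--parallel contributions are as you state; for the cross term, with $A=1-z_jz_l$ and $B=\sqrt{(1-z_j^2)(1-z_l^2)}$ one has $A^2-B^2=(z_j-z_l)^2$, so the Jensen integral gives $-\tfrac12\log\bigl[(1+z_j)(1-z_l)\bigr]$ per ordered pair when $z_j>z_l$, and the collected coefficients $a_j,b_j$ are exactly right. The only point worth making explicit is that your separable formula is derived under the standing hypothesis $z_1>\cdots>z_p$ (through the sign of $|z_j-z_l|$); the argument is completed by the observation you already make, that the unconstrained minimisers of the separate pieces automatically satisfy $z_j-z_{j+1}=(r_j+r_{j+1})/(N-1)>0$, so they lie in the open ordered simplex and hence give the unique minimum there.
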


\begin{obs}\label{rem_mono}
Note that since $\beta_1>0$ and we have $\alpha_{\ell} + \beta_{\ell}t_{\ell} = \alpha_{l+1} + \beta_{l+1}t_{\ell}$, the function $r(x)$ is monotonically increasing, in other words, $r_{j} \geq r_{k}$ if $j > k $.
\end{obs}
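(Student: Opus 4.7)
The plan is to unpack Definition \ref{defn_Diamond}: the claim should follow from the sign conditions on the coefficients together with the continuity of the piecewise linear function $r$ at its breakpoints. First I would note that on each sub-interval $[t_{\ell-1},t_\ell]$ the function $r$ coincides with the affine map $\alpha_\ell + \beta_\ell x$, whose slope $\beta_\ell$ is non-negative by hypothesis; hence $r$ is non-decreasing on every such piece. The identity $\alpha_\ell + \beta_\ell t_\ell = \alpha_{\ell+1} + \beta_{\ell+1} t_\ell$ recorded in the remark is precisely the continuity of $r$ at the node $t_\ell$, so the pieces paste together without downward jumps. Combining these two facts yields that $r$ is monotonically non-decreasing on $[0,M]$.

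Next I would use $\alpha_1 = 0$ together with $\beta_1 > 0$ to observe that the first piece is strictly increasing from $r(0)=0$, and in particular $r(x) > 0$ on $(0,M]$, so that the construction of the parallels is non-degenerate. Specialising to integer arguments then gives $r_j \geq r_k$ whenever $1 \leq k < j \leq M$, which is the comparison asserted in the remark. The symmetry $r(x)=r(2M-x)$ imposed in Definition \ref{defn_Diamond} forces $r$ to be non-increasing on $[M,2M]$, so the remark is to be read as a statement on the northern half; the southern half inherits the reflected monotonicity automatically, and together they cover all indices $1\leq j \leq p = 2M-1$.

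No step of this argument is genuinely delicate: the remark is, in essence, a direct consequence of the definition. The only point that requires any attention is the interaction between the non-decreasing behaviour on $[0,M]$ and the reflection defining $r$ on $[M,2M]$, but this is a matter of bookkeeping rather than a real obstacle.
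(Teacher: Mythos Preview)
Your proposal is correct and matches the spirit of the paper: the remark is stated there without a separate proof, citing only $\beta_1>0$ and the continuity condition $\alpha_\ell+\beta_\ell t_\ell=\alpha_{\ell+1}+\beta_{\ell+1}t_\ell$ as justification, and you have supplied exactly the routine details behind that sentence. Your observation that the inequality $r_j\geq r_k$ for $j>k$ must be read on the range $1\leq k<j\leq M$ (with the southern half handled by the reflection $r(x)=r(2M-x)$) is a useful clarification that the paper leaves implicit.
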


We call the family of points defined by the $r_{j}'s$ given in Definition \ref{defn_Diamond} and the $z_{j}'s$ as in Proposition \ref{Propminparallel} the \textit{Diamond ensemble} and we denote it by $\diamond (N)$, omiting in the notation the dependence on all the parameters $n$, $t_1,\ldots,t_n$, $\alpha_1,\ldots,\alpha_n$, $\beta_1,\ldots,\beta_n$, $\theta_1,\ldots,\theta_n$. 
We may not worry about the angle $\theta_j$, since the results here presented are valid for any choice of $\theta_j\in[0,2\pi ] $, so we denote $\Omega(p,r_{j},z_j,\theta_j)$ by $\Omega(p,r_{j},z_j)$.
The choice of parameters $n$ and $r_{\ell}$ for $1\leq \ell \leq n$ hence define a sequence of configurations of points were not all the integer numbers are taken but still the sequence goes to infinity as we make $M$ cover the natural numbers.

\begin{figure}[htp]
\centering
\includegraphics[width=.3\textwidth]{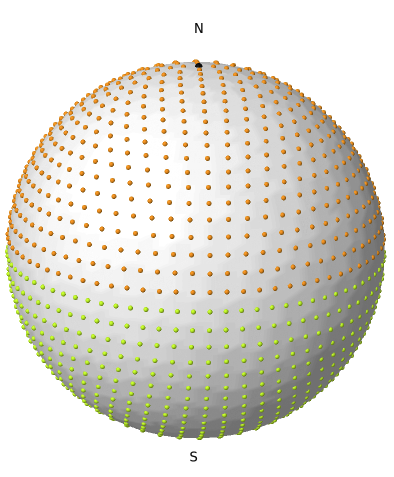}\hfill
\includegraphics[width=.35\textwidth]{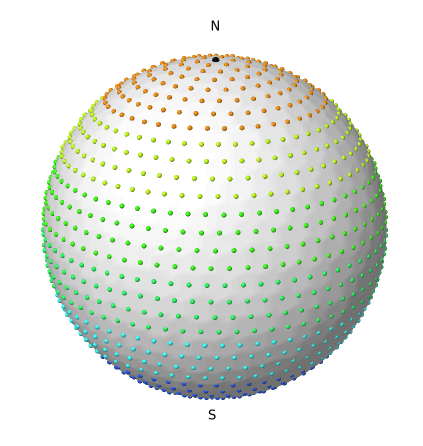}\hfill
\includegraphics[width=.3\textwidth]{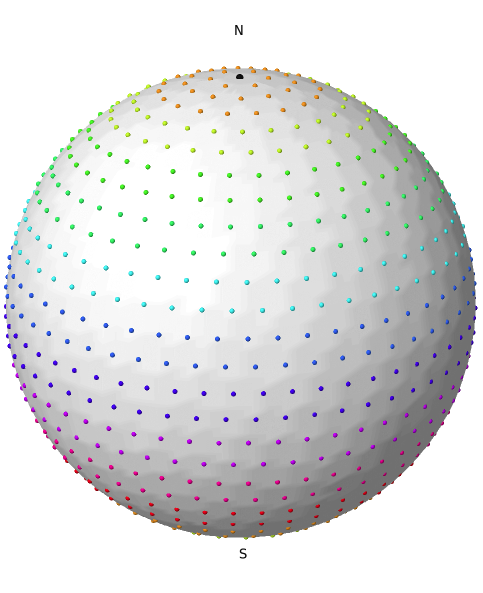}
\caption{Models of the Diamond ensemble for different choices of parameters.}
\label{fig:figure3}
\end{figure}

\subsection{Some extra properties}

The total number of points of $\diamond\left( N \right)$ is
\begin{equation*}
N=2-(\alpha_n+\beta_nM)+2\sum_{\ell=1}^n\;\sum_{j=t_{\ell-1}+1}^{t_\ell}(\alpha_\ell+\beta_\ell j).
\end{equation*}
We denote by 
\begin{equation}\label{eq_N_parcial}
N_j=1 + \sum_{k=1}^{j-1}r_k.
\end{equation}
Using the notation $N_{j}$ we can rewrite the value of $z_{j}$:
\begin{equation}\label{zjs}
z_j
=
1-\frac{1+r_j+2\sum_{k=1}^{j-1}r_k}{N-1}
=
1-\frac{2N_{j}}{N-1} - \frac{r_{j}-1}{N-1}.
\end{equation}
The following proposition shows the depende of $N$ on the number of parallels.
\begin{lem}\label{pos}
There exists constants $a_{1},a_{2}$ depending only on the choice of parameters $n, t_{\ell}, \alpha_{\ell}, \beta_{\ell}$ for all $1\leq \ell \leq n$ such that 
\begin{equation*}
a_{1}M^2 \leq N \leq a_{2}M^2
\end{equation*}
\end{lem}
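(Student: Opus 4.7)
The plan is to exploit the symmetry $r(x)=r(2M-x)$ together with the explicit formula for $N$ displayed just before the statement. By symmetry
\begin{equation*}
\sum_{j=1}^{2M-1}r_j \;=\; 2\sum_{j=1}^{M-1}r_j+r_M,
\end{equation*}
so the lemma reduces to showing that the truncated sum $\sum_{j=1}^{M}r_j$ is comparable to $M^2$, up to multiplicative constants depending only on the parameters $n,t_\ell,\alpha_\ell,\beta_\ell$.

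For the upper bound, I would use the hypotheses $\alpha_\ell\leq AM$ and $\beta_\ell\leq A$ from Definition \ref{defn_Diamond}. Since for each $1\leq j\leq M$ there is some $\ell$ with $r_j=\alpha_\ell+\beta_\ell j$, we get $r_j\leq AM+Aj\leq 2AM$. Summing gives $\sum_{j=1}^{M}r_j\leq 2AM^2$, and substituting into $N=2-(\alpha_n+\beta_nM)+2\sum_{\ell}\sum_j(\alpha_\ell+\beta_\ell j)$ produces $N\leq a_2 M^2$ for a constant $a_2$ that absorbs the lower-order terms coming from $-(\alpha_n+\beta_n M)$.

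For the lower bound, the key hypothesis is that on the first piece $[0,t_1]$ the function is $r(x)=\beta_1 x$ (because $\alpha_1=0$) with $\beta_1\geq 1$ a positive integer, and that $t_1\geq cM$ for a parameter-dependent constant $c>0$. Thus $r_j\geq j$ for every $1\leq j\leq t_1$, and hence
\begin{equation*}
\sum_{j=1}^{M-1}r_j \;\geq\; \sum_{j=1}^{\lfloor cM\rfloor}j \;\geq\; \frac{c^2}{2}M^2-O(M).
\end{equation*}
Dropping the nonnegative term $r_M$ and the constant, this yields $N\geq a_1 M^2$ for $a_1$ a little smaller than $c^2$, for all $M$ large enough; the finitely many small values of $M$ can be absorbed by shrinking $a_1$ further.

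There is no real obstacle here: everything is a direct estimate from the definitions. The only bookkeeping subtlety is that the indices $j$ are integers while the breakpoints $t_\ell$ are integers too (assumed in Definition \ref{defn_Diamond}), so I don't need an integral approximation; Remark \ref{rem_mono} (monotonicity of $r$) ensures that truncating the sum at $t_1\leq M$ instead of at $M$ only loses a constant factor, and the edge case $t_1=M$ (i.e.\ $n=1$) is the cleanest since then the full sum is $\sum_{j=1}^{M}\beta_1 j\sim \beta_1 M^2/2$.
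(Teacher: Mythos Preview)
Your proposal is correct and follows essentially the same approach as the paper's proof: for the upper bound both you and the paper use $r_j\leq AM+Aj$ and sum, and for the lower bound both restrict to the first linear piece $r(x)=\beta_1 x$ on $[0,t_1]$ with $t_1\geq cM$ to extract a sum of order $c^2M^2/2$. The only cosmetic difference is that you organise the computation via the symmetry identity $\sum_{j=1}^{2M-1}r_j=2\sum_{j=1}^{M-1}r_j+r_M$, while the paper bounds $N_{t_1}$ directly; your handling of the constant $a_1$ (absorbing lower-order terms and finitely many small $M$) is in fact slightly more careful than the paper's explicit choice $a_1=\tfrac{c^2-c}{2}$.
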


\begin{proof}
From the properties of $\alpha_{\ell},\beta_{\ell}$ we have that
\begin{multline*}
N
=
2-(\alpha_n+\beta_nM)+2\sum_{\ell=1}^n\;\sum_{j=t_{\ell-1}+1}^{t_\ell}(\alpha_\ell+\beta_\ell j)
\leq
2+2\sum_{\ell=1}^n\;\sum_{j=t_{\ell-1}+1}^{t_\ell}(AM+A j)
\\
=
2+ 2AM^2 + AM(M+1)
=
3AM^2 + AM + 2,
\end{multline*}
where $A$ is the constant from Definition \ref{defn_Diamond}.
So it is enought to take $a_{2} = 4A$.
For the other inequality, using again the properties from Definition \ref{defn_Diamond} we have
\begin{equation*}
N_{t_{1}} 
= 
1+
\sum_{j=1}^{t_1-1}(\alpha_1+\beta_1 j)
\geq
1+
\sum_{j=1}^{cM-1} j
=
1+
\frac{cM(cM-1)}{2}
=
\frac{c^2}{2} M^2 - \frac{cM}{2} + 1.
\end{equation*}
We take $a_{1} = \frac{c^2-c}{2}$ and we conclude wiht
\begin{equation}\label{eq_N1}
N \geq N_{t_{1}} \geq a_{1}M^{2}.
\end{equation}

\end{proof}

\begin{lem}\label{lem_Nj}
There exist constants $k_{1},k_{2}\in\mathbb{R}_{+}$ depending only on the choice of parameters $n, t_{\ell}, \alpha_{\ell}, \beta_{\ell}$, $1\leq \ell \leq n$ such that for all $1 \leq j \leq M$ we have
\begin{equation*}
k_{1} r_{j}^2
\leq
N_{j}
\leq
k_{2} r_{j}^2
\end{equation*}
\end{lem}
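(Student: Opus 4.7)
The plan is to prove both inequalities by splitting into the two natural cases $j\le t_1$ (where the explicit form $r_j=\beta_1 j$ available from $\alpha_1=0$ makes everything computable) and $j>t_1$ (where we bound $N_j$ and $r_j$ uniformly in terms of $M^2$ and fall back on Lemma \ref{pos}).

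In Case 1, $j\le t_1$, I use $r_j=\beta_1 j$ and the exact value
\begin{equation*}
N_j=1+\sum_{k=1}^{j-1}\beta_1 k=1+\beta_1\,\frac{j(j-1)}{2}.
\end{equation*}
For the upper bound, $N_j\le 1+\tfrac{\beta_1}{2}j^2\le\tfrac{\beta_1+2}{2}j^2=\tfrac{\beta_1+2}{2\beta_1^2}\,r_j^2$. For the lower bound, one checks $j=1$ separately (where $N_1=1$ and $r_1^2=\beta_1^2$) and for $j\ge 2$ uses $j(j-1)\ge j^2/2$ to get $N_j\ge\tfrac{\beta_1}{4}j^2=\tfrac{1}{4\beta_1}r_j^2$. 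So in this case both bounds hold with constants depending only on $\beta_1$.

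In Case 2, $j>t_1$, I control $r_j$ from both sides using only the general parameter restrictions. The upper bound $r_j\le 2AM$ is immediate from $\alpha_\ell\le AM$, $\beta_\ell\le A$ and $j\le M$. The lower bound $r_j\ge \beta_1 cM$ follows from monotonicity (Remark \ref{rem_mono}) since $r_j\ge r_{t_1}=\beta_1 t_1\ge\beta_1 cM$. Hence $\beta_1^2c^2M^2\le r_j^2\le 4A^2M^2$. Meanwhile, Lemma \ref{pos} and inequality \eqref{eq_N1} give $a_1M^2\le N_{t_1}\le N_j\le N\le a_2M^2$. Combining,
\begin{equation*}
N_j\le a_2M^2\le\frac{a_2}{\beta_1^2c^2}\,r_j^2,\qquad N_j\ge a_1M^2\ge\frac{a_1}{4A^2}\,r_j^2.
\end{equation*}

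Finally I take $k_1$ to be the minimum and $k_2$ the maximum of the constants produced in the two cases, which depend only on $\beta_1,c,A$ (and $a_1,a_2$ from Lemma \ref{pos}), hence only on the parameters $n,t_\ell,\alpha_\ell,\beta_\ell$. The only real obstacle is the usual care one has to take with the small-$j$ boundary (the $j=1$ case in Case 1, where $N_j=1$ does not yet behave quadratically), and with the transition $j=t_1$, where one must make sure both estimates agree; neither is serious, since $t_1\ge cM$ guarantees that the split point already sits at the quadratic scale. Otherwise the argument is a bookkeeping exercise assembling the piecewise-linear structure of $r$, the monotonicity in Remark \ref{rem_mono}, and Lemma \ref{pos}.
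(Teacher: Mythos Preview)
Your proof is correct and follows essentially the same two-case split as the paper (the ranges $1\le j\le t_1$ and $j>t_1$, with Case~2 handled via Lemma~\ref{pos} and monotonicity exactly as you do). The only cosmetic difference is in Case~1: you exploit $\alpha_1=0$ from Definition~\ref{defn_Diamond} to write $r_j=\beta_1 j$ and bound $N_j$ by elementary inequalities on $j(j-1)$, whereas the paper keeps $\alpha_1$ in the formulas and compares the two parabolas $N(x)$ and $r^2(x)$ via their values at $x=1$ and their derivatives; both arguments yield constants depending only on $\beta_1$, so nothing substantive changes.
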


\begin{proof}

For $1\leq j \leq t_{1}$ we have 
\begin{equation*}
N_{j} 
= 
1 + \sum_{k=1}^{j-1}(\alpha_1+\beta_1 k)
=
\frac{\beta_{1}}{2}j^2 + \left( \alpha_{1} - \frac{\beta_{1}}{2} \right) j + 1 - \alpha_{1}
\end{equation*}
and
\begin{equation*}
r_{j}^2
=
(\alpha_1+\beta_1 j)^2
=
\beta_{1}^2j^2 + 2\alpha_{1}\beta_{1}j + \alpha_{1}^2.
\end{equation*}
We observe that both $N_{j} $ and $r_{j}^2$ are positive branches of paraboles in $j$. %and that from a concrete value of $j$ on we have
%\begin{equation*}
%\frac{1}{3}r_{j}^2
%\leq 
%N_{j} 
%\leq
%r_{j}^2.
%\end{equation*}
Let us consider the functions
\begin{equation*}
r^2(x) = \beta_{1}^2x^2 + 2\alpha_{1}\beta_{1}x + \alpha_{1}^2 
\end{equation*}
and
\begin{equation*}
N(x)
= 
\frac{\beta_{1}}{2}x^2 + \left( \alpha_{1} - \frac{\beta_{1}}{2} \right) x + 1 - \alpha_{1}.
\end{equation*}
On the one hand, we have $r^2 (1) \geq 1 = N(1)$ and $(r^2(x))' \geq N(x)'$ for all $x\in(1,t_{1})$, so we can take $\dot{k_{2}} = 1$ and conclude that $N_{j} \leq \dot{k_{2}} r_{j}^2$ for all $1\leq j \leq t_{1}$.
On the other hand, if we take $\dot{k_{1}} = \frac{1}{2(\beta_{1}^2 + 2\alpha_{1}\beta_{1} + \alpha_{1}^2 )}$ then $\dot{k_{1}}r^{2}(1) = \frac{1}{2} < N(1)$ and $\dot{k_{1}}(r^2(x))' \leq N(x)'$ for all $x\in(1,t_{1})$ so we conclude that $\dot{k_{1}} r_{j}^2 \leq N_{j}$ for all $1\leq j \leq t_{1}$.

For $j > t_{1}$ we have that 
\begin{equation*}
r_{j}^{2} 
=
\left( \alpha_{\ell} + \beta_{\ell}j \right)^2 
\leq
\left( AM + AM \right)^2
=
 4A^2M^2
\end{equation*}
and from equation \eqref{eq_N1} we have that
\begin{equation*}
N_{j}
\geq
N_{t_{1}}
\geq
a_{1} M^2
\end{equation*}
So it is enough to take $\tilde{k_{1}} = \frac{a_{1}}{4A^2}$.
On the other had, by Lemma \ref{pos} we have 
\begin{equation*}
N_{j}
\leq
N
\leq
a_{2}M^2
\end{equation*}
and by the monotonicity of the function (see Remark \ref{rem_mono}) we have
\begin{equation*}
r_{j}^2 
=
\left(\alpha_{\ell} + \beta_{\ell}j \right)^2
\geq
t_{1}^{2}
\geq
c^2M^2.
\end{equation*}
So it is enough to take $\tilde{k_{2}} = \frac{a_{2}}{c^2}$.
We conclude by taking 
\begin{equation*}
k_{1} = \min\{ \tilde{k_{1}}, \dot{k_{1}}\}
\text{ and }
k_{2} = \min\{ \tilde{k_{2}}, \dot{k_{2}}\}
.
\end{equation*}

\end{proof}

\section{An area regular partition coming from the Diamond ensemble}\label{sec_ARP}

%------------------------------------------------------
%------------------------------------------------------
%------------------------------------------------------
%------------------------------------------------------

\subsection{About area-regular partitions on the sphere}

On the literature we can find several references to area regular partitions on the sphere $\mathbb{S}^{2}$ but no so many explicit examples of them, we refer to \cite{Alexander1972}, \cite{BL88}, \cite{doi:10.1002/rsa.10036} and \cite{10.2307/2039137}.
%Here there is a brief recopilation:
%Alexander \cite{Alexander1972} uses the existence of a diameter-bounded set of equal-area partitions of $\mathbb{S}^2$ to analyze the maximum sum of distances between points.
%Stolarsky \cite{10.2307/2039137} uses the partition of Alexander and asserts the generalization of this partition to $\mathbb{S}^d$ for all $d$, but offers no construction or existence proof.
%This work is later cited by Beck and Chen \cite{87e609866d654db2aa5e674048289af3} and Bourgain and Lindenstrauss \cite{BL88} (they gave a partial construction) among others. 
%Feige and Schechtman \cite{doi:10.1002/rsa.10036} finally gave a construction.
In \cite{Zhou} Zhou describes an area regular partition in $\mathbb{S}^{2}$ quite similar to the one that we present here.
The same construction is explained in \cite{MR1306011} and later in \cite{dolomites}.
This construction was modified by Bondarenko et al. \cite{Bondarenko2015} to create a partition with geodesic boundaries for the creation of well-separated spherical designs.
On his PhD dissertation \cite{Leopardi}, Leopardi studied the construction of Zhou generalizing it to higher dimensional spheres. %and creation some numerical experiments that show that the constant in the diameter in should be $6.5$.
He also provides a code in Mathlab available at \url{http://eqsp.sourceforge.net/} where one can obtain the area regular partition in $\mathbb{S}^{d}$ for any number of cells.
%Even then, it is an iterative algorithm.

%------------------------------------------------------
%------------------------------------------------------
%------------------------------------------------------
%------------------------------------------------------

\subsection{ARP for the Diamond ensemble}

Given a family of points coming from the Diamond ensemble, we define an area regular partition by taking two spherical caps, one centered in the North pole and the other in the South pole and a collection of rectangular regions located in some collars, see Figure \ref{fig:figure1}.
As for the points, we allow a random angle of rotation $\theta_{j}$ in every collar.

\begin{figure}[htp]
\centering
\includegraphics[width=0.5\textwidth]{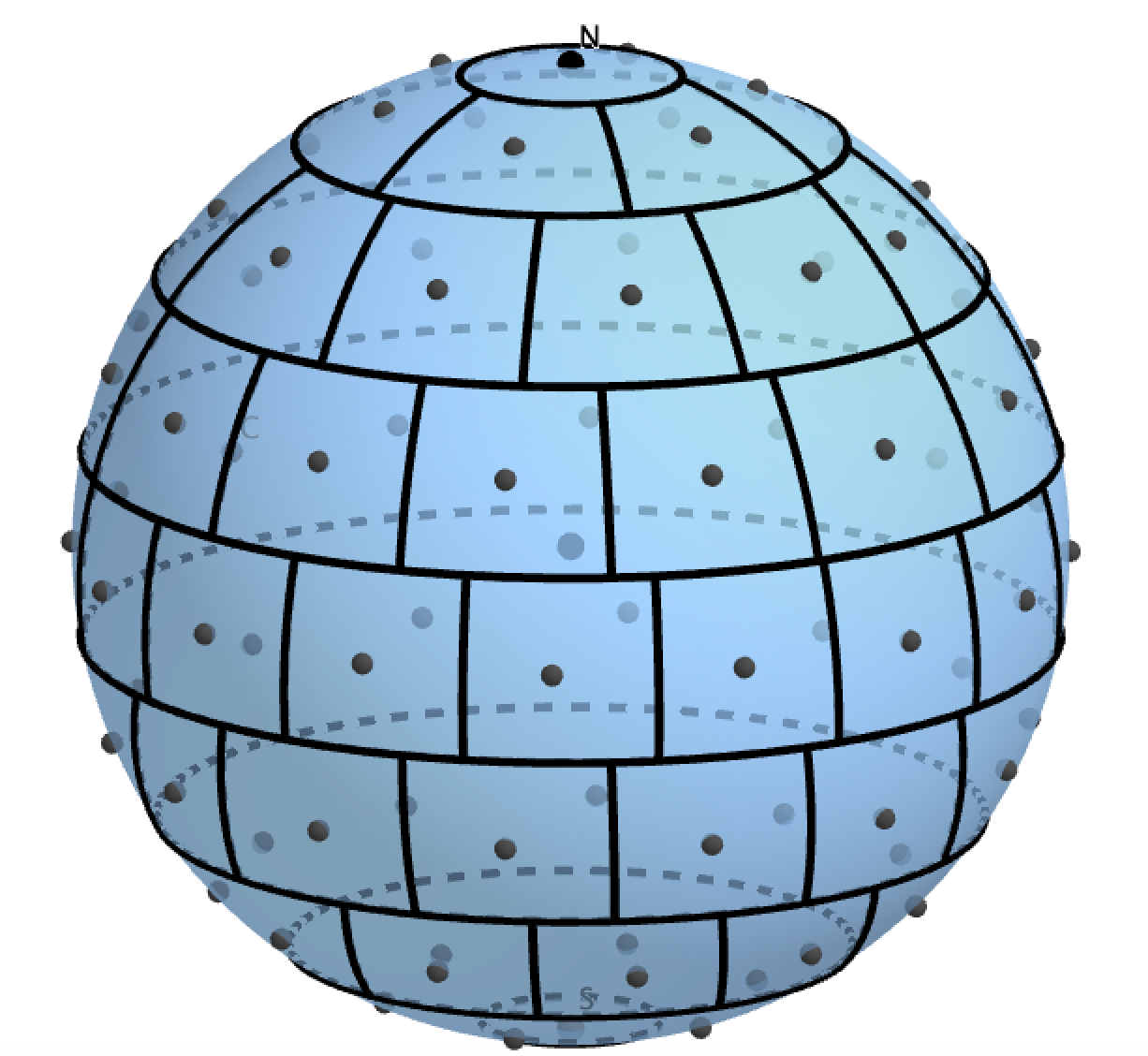}
\caption{Example of an area regular partition coming from the Diamond ensemble.}
\label{fig:figure1}
\end{figure}

\begin{defn}\label{defn_ARP}
Let $p,M$ be two positive integers with $p=2M-1$ and let us consider the following subsets of $\mathbb{S}^{2}$:
\begin{itemize}
\item A spherical cap centered at the North pole with height $1 -h_{1}$, that we denote by $R_{N}$.
\item The spherical rectangles on the North hemisphere given in spherical coordinates on the sphere by: 
	\begin{equation*}
	R_{j}^{i} = \left[ \frac{2\pi i}{r_{j}} + \frac{\pi}{r_{j}} + \theta_{j}, \frac{2\pi (i+1)}{r_{j}} + \frac{\pi}{r_{j}} + \theta_{j}\right]
	\times 
	\left[\arccos\left( h_{j} \right), 
	\arccos\left( h_{j+1} \right) \right].
	\end{equation*}
\item The spherical rectangles:
	\begin{equation*}
	R_{M}^{i} = \left[ \frac{2\pi i}{r_{M}} + \frac{\pi}{r_{M}} + \theta_{M}, \frac{2\pi (i+1)}{r_{M}} + \frac{\pi}{r_{M}} + \theta_{M}\right]
	\times 
	\left[\arccos\left( h_{M}\right),
	\pi-\arccos\left( h_{M}\right) \right],
	\end{equation*}
on a collar containing the equator.
\item The symmetrization of the North hemisphere.
\end{itemize}
Let $h_{j}$ be defined by the following recurrence relation:
\begin{equation*}
h_{1} = 1 - \frac{2}{N},
\qquad
h_{j+1} = h_{j} - \frac{2r_{j}}{N}
\end{equation*}
for $1 \leq j \leq M$ or in an explicit formula, using the notation from \eqref{eq_N_parcial}, by
\begin{equation*}
h_{j} = 1 - \frac{2N_{j}}{N}.
\end{equation*}
%following the notation from \eqref{eq_N_parcial}.
%In \eqref{defn_R_gen}, since $h_{j}>0$ for all $1 \leq j \leq M$, we can use indistinguisly $\arctan \left( \frac{\sqrt{1 - h_{j}^{2}}}{h_{j}} \right)$ or $\arccos \left( h_{j} \right)$.

\end{defn}

%------------------------------------------------------
%------------------------------------------------------
%------------------------------------------------------
%------------------------------------------------------

%------------------------------------------------------
%------------------------------------------------------
%------------------------------------------------------
%------------------------------------------------------

\begin{prop}\label{thm_ARP}
The partition defined in Definition \ref{defn_ARP} is an area regular partition.
\end{prop}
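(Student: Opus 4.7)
The plan is to verify that every cell in Definition \ref{defn_ARP} has the same area $4\pi/N$, and that the cells tile $\mathbb{S}^2$ without overlap. In spherical coordinates on $\mathbb{S}^2$, with $\phi$ the longitude and $\psi$ the polar angle, the area element is $\sin\psi\, d\phi\, d\psi = -d\phi\, d(\cos\psi)$, so a product region $[\phi_1,\phi_2]\times[\arccos h, \arccos h']$ has area $(\phi_2-\phi_1)(h-h')$. This makes every area computation transparent.

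First I would handle the polar cap $R_{\mathcal{N}}$: its area is $2\pi(1-h_1)=2\pi\cdot\tfrac{2}{N}=\tfrac{4\pi}{N}$, using the definition $h_1=1-2/N$. Next, for a rectangle $R_j^i$ with $1\leq j\leq M-1$, the longitudinal width is $2\pi/r_j$ and the vertical extent (in $z$-coordinate) is $h_j-h_{j+1}$. The recurrence $h_{j+1}=h_j-2r_j/N$ gives $h_j-h_{j+1}=2r_j/N$, so the area is $\tfrac{2\pi}{r_j}\cdot\tfrac{2r_j}{N}=\tfrac{4\pi}{N}$, as desired. By the symmetry of the construction around the equator, the mirror rectangles on the South hemisphere are handled identically, and the South polar cap matches the North one.

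The genuinely nontrivial piece is the equatorial band of rectangles $R_M^i$, whose vertical extent in $z$ is $h_M-(-h_M)=2h_M$ and longitudinal width is $2\pi/r_M$, giving area $4\pi h_M/r_M$. For this to equal $4\pi/N$ I need $h_M=r_M/N$. Here the symmetry hypothesis $r(x)=r(2M-x)$ from Definition \ref{defn_Diamond} is essential: it yields $\sum_{k=1}^{2M-1}r_k=2\sum_{k=1}^{M-1}r_k+r_M$, hence
\begin{equation*}
N=2+\sum_{k=1}^{2M-1}r_k=2+2(N_M-1)+r_M=2N_M+r_M,
\end{equation*}
where I used $N_M=1+\sum_{k=1}^{M-1}r_k$ from \eqref{eq_N_parcial}. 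Substituting into $h_M=1-2N_M/N$ gives $h_M=(N-2N_M)/N=r_M/N$, which is exactly what is needed.

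Finally I would confirm that these cells actually tile the sphere: the polar caps together with the bands $\{\arccos h_j\leq\psi\leq\arccos h_{j+1}\}$ for $1\leq j\leq M-1$ and the equatorial band $\{\arccos h_M\leq\psi\leq\pi-\arccos h_M\}$ (plus its South-hemisphere reflection) exhaust $\mathbb{S}^2$ in the $\psi$ direction, and within each band the $r_j$ rectangles partition the full longitudinal range $[0,2\pi]$. A sanity check on the count reinforces this: the total number of cells is $2+2\sum_{j=1}^{M-1}r_j+r_M=N$, matching the total area $N\cdot\tfrac{4\pi}{N}=4\pi=\mu(\mathbb{S}^2)$. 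The only step requiring any thought is the equatorial one, and it is resolved by the reflection symmetry built into the definition of the Diamond ensemble.
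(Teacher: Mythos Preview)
Your proof is correct and follows essentially the same approach as the paper: both compute the area of the polar cap, a generic rectangle $R_j^i$ via the recurrence $h_j-h_{j+1}=2r_j/N$, and the equatorial rectangle $R_M^i$ via the identity $N-2N_M=r_M$ coming from the symmetry $r(x)=r(2M-x)$. Your version is slightly more thorough in that you also verify the cells tile $\mathbb{S}^2$ and cross-check the cell count, which the paper leaves implicit.
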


%------------------------------------------------------
%------------------------------------------------------
%------------------------------------------------------
%------------------------------------------------------

\begin{proof}
From the definition, we can see that it is enough to compute the area of the North pole region, one of the regions $R_{j}^{i}$ and one of the regions $R_{M}^{i}$.
We start by computing the area of the region containing the North pole:

\begin{equation*}
A(R_{N})
=
2\pi (1 - h_{1})
=
2\pi \left( 1 - 1 + \frac{2}{N}  \right)
=
\frac{4\pi}{N}.
\end{equation*}
Now we consider a rectangle $R_{j}^{i}$ and compute its area.
\begin{multline*}
A(R_{j}^{i})
=
\int_{\frac{2\pi i}{r_{j}} + \frac{\pi}{r_{j}} + \theta_{j}}^{\frac{2\pi (i+1)}{r_{j}} + \frac{\pi}{r_{j}} + \theta_{j}}
\int_{\arccos\left( h_{j} \right)}^{\arccos\left( h_{j+1}\right)}
\sin (\theta) d\theta d\phi
\\
=
\left(
\int_{\frac{2\pi i}{r_{j}} + \frac{\pi}{r_{j}} + \theta_{j}}^{\frac{2\pi (i+1)}{r_{j}} + \frac{\pi}{r_{j}} + \theta_{j}}  d\phi
\right)
\left(
\int_{\arccos\left( h_{j} \right)}^{\arccos\left( h_{j+1}\right)}
\sin (\theta) d\theta
\right)
\\
=
\frac{2\pi}{r_{j}}
\left(
\cos \left( \arccos(h_{j}) \right)
- \cos \left( \arccos(h_{j+1}) \right)
\right)
=
\frac{2\pi}{r_{j}}
\left(
h_{j} -h_{j+1}
\right)
.
\end{multline*}
By the recurrence relation defining $h_{j+1}$, we have:
\begin{equation*}
A(R_{j}^{i})
=
\frac{2\pi}{r_{j}}
\left(
h_{j} -h_{j} + \frac{2r_{j}}{N}
\right)
=
\frac{4\pi}{N}
.
\end{equation*}
It only rest to see the case $R_{M}^{i}$. 
We compute the area of half of the region:
\begin{equation*}
\begin{split}
\frac{A \left( R_{M}^{i} \right)}{2}
=
&
\int_{\frac{2\pi i}{r_{M}} + \frac{\pi}{r_{M}} + \theta_{M}}^{\frac{2\pi (i+1)}{r_{M}} + \frac{\pi}{r_{M}} + \theta_{M}}
\int_{\arccos\left( h_{M}\right)}^{\frac{\pi}{2}}
\sin (\theta) d\theta d\phi
=
\frac{2\pi}{r_{M}}
h_{M}.
\end{split}
\end{equation*}
Using the explicit definition of $h_{M}$ we can write
\begin{multline*}
\frac{A \left( R_{M}^{i} \right)}{2}
=
\frac{2\pi}{r_{M}} \left(  1 - \frac{2}{N} N_{M} \right)
\\
=
\frac{2\pi}{N} \left( \frac{N}{r_{M}} - \frac{2N_{M}}{r_{M}} \right)
=
\frac{2\pi}{N}\frac{N - 2N_{M}}{r_{M}} 
=
\frac{2\pi}{N}\frac{r_{M}}{r_{M}} 
=
\frac{2\pi}{N} .
\end{multline*}

\end{proof}

%------------------------------------------------------
%------------------------------------------------------
%------------------------------------------------------
%------------------------------------------------------

\begin{prop}
Every region of the partition defined in Definition \ref{defn_ARP} contains a unique point of the Diamond ensemble, so it drops the notation of the regions.
\end{prop}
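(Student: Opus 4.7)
The plan is to show that the latitude band of each rectangle in Definition \ref{defn_ARP} captures exactly one parallel of $\diamond(N)$, and that within that parallel the longitude window captures exactly one of the equispaced points. The whole argument rests on the strict inequalities
\begin{equation*}
h_{j+1} < z_j < h_j, \qquad 1\leq j \leq M-1,
\end{equation*}
which are precisely the condition that parallel $j$ sits inside the latitude band of $R_j^i$.

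First I would establish these inequalities by direct substitution of $h_j = 1 - 2N_j/N$ from Definition \ref{defn_ARP} and the formula $z_j = 1 - 2N_j/(N-1) - (r_j-1)/(N-1)$ from \eqref{zjs}. The right inequality $z_j < h_j$ simplifies, after clearing denominators, to $-2N_j/N < r_j - 1$, which is immediate since $N_j \geq 1$ and $r_j \geq 1$. The left inequality $z_j > h_{j+1}$ reduces to $N r_j + N > 2 N_{j+1}$; this holds because $N_{j+1} \leq N-2$ (the south pole and at least one further point below parallel $j+1$ are missing from $N_{j+1}$), so $2N_{j+1} \leq 2N-4 < Nr_j + N$. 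Both inequalities are strict, and applying them to the indices $j-1$ and $j+1$ yields $z_{j-1}>h_j$ and $z_{j+1}<h_{j+1}$, so only parallel $j$ can intersect the latitude band $[h_{j+1},h_j]$ of $R_j^i$.

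Next I would handle the longitudes. On parallel $j$ the points $x_j^1,\ldots,x_j^{r_j}$ are equispaced with longitude step $2\pi/r_j$, while the longitude window of $R_j^i$ has exactly that width and is centered at $2\pi(i+1)/r_j + \theta_j$, which is the longitude of $x_j^{i+1}$. Hence $R_j^i$ contains precisely one point. For the equatorial collar $R_M^i$ the symmetry $r(x)=r(2M-x)$ from Definition \ref{defn_Diamond} forces $z_M=0$ and, via the identity $N = r_M + 2N_M$, also $h_M=r_M/N>0$, so the equator lies strictly inside $[-h_M,h_M]$, while $z_{M-1}>h_M$ (together with its mirror $z_{M+1}<-h_M$) keeps every other parallel out; the longitude argument then picks out a unique $x_M^{i+1}$ in each $R_M^i$. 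The cap $R_N$ contains only the North pole, since $z_1<h_1$, and $R_S$ is handled by south--north symmetry. Finally a count matches cardinalities: $1+\sum_{j=1}^{M-1}r_j+r_M+\sum_{j=M+1}^{2M-1}r_j+1 = 2+\sum_{j=1}^{p}r_j = N$, so the correspondence between regions and points is bijective.

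The main obstacle, though a modest one, is to check that the strict inequalities $h_{j+1}<z_j<h_j$ survive the edge cases $r_j=1$, $j=1$ and $j=M-1$; once those are pinned down, the rest is a clean partition-plus-cardinality argument.
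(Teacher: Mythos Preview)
Your proposal is correct and follows essentially the same approach as the paper: the core of both proofs is establishing the strict inequalities $h_{j+1}<z_j<h_j$ together with $h_M>0$, and your algebraic reductions (to $-2N_j/N<r_j-1$ and $Nr_j+N>2N_{j+1}$) are equivalent to the paper's manipulations. Your version is somewhat more complete in that you spell out the longitude argument, the equatorial collar, the polar caps, and the final cardinality match, whereas the paper leaves these steps implicit after declaring the height inequalities sufficient.
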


\begin{proof}
Since given a collar, it is partitioned in such a way that every point belongs to a different region, it is enough to prove that 
\begin{equation*}
h_{j+1}
<
z_{j}
<
h_{j}
\end{equation*}
for all $1\leq j \leq M-1$ and that $h_{M} >0$.

We start by proving that $z_{j} < h_{j}$, which follows easily from these two facts:
\begin{equation*}
\frac{2}{N-1}N_{j} 
>
\frac{2}{N}N_{j} 
\Rightarrow
1-\frac{2}{N-1}N_{j} 
<
1-\frac{2}{N}N_{j},
\end{equation*}
\begin{equation*}
\frac{r_{j} -1 }{N-1} \geq 0.
\end{equation*}
If we take now the characterization for $z_{j}$ and $h_{j}$ given in equation \eqref{zjs} and Definition \ref{defn_ARP} respectively, the proof is done.
To prove that $h_{j+1} \leq z_{j}$ we use the fact that $N > 2N_{j+1}$ for all $1\leq j \leq M-1$ and $r_{j} \geq r_{1} \geq 1$ by the properties of Definition \ref{defn_Diamond} so, 
\begin{equation*}
\begin{split}
Nr_{j} > 2N_{j+1}
&
\Rightarrow
2Nr_{j} > N(r_{j}-1) + 2N_{j+1}
\\
&
\iff
2N(N_{j+1}-N_{j}) >N(r_{j}-1) + 2N_{j+1}
\\
&
\iff
2NN_{j+1} - 2N_{j+1} > 2NN_{j}  +N(r_{j}-1)
\\
&
\iff
2N_{j+1}(N-1) > 2NN_{j}  +N(r_{j}-1)
\\
&
\iff
\frac{2N_{j+1}}{N} > \frac{2N_{j}}{N-1}  +\frac{r_{j}-1}{N-1}
\\
&
\iff
1-\frac{2N_{j+1}}{N} < 1- \frac{2N_{j}}{N-1}  -\frac{r_{j}-1}{N-1}
\\
&
\iff
 h_{j+1} < z_{j}.
\end{split}
\end{equation*}
We end with 
\begin{equation*}
h_{M}
=
1 - \frac{2N_{M}}{N}
=
1 -  \frac{N-r_{M}}{N}
=
\frac{r_{M}}{N}
>
0.
\end{equation*}
\end{proof}

We describe some properties of the area regular partition.

\begin{prop}
The radius of the region $R_{N}$ is $2 \arcsin \left(\frac{1}{\sqrt{N}}\right) \approx \frac{2}{\sqrt{N}}$ for big $N$.
\end{prop}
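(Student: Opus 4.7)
The plan is short because this is essentially a direct computation from the definition of $R_N$. First I would unpack the definition: by Definition \ref{defn_ARP}, $R_N$ is the spherical cap centered at the North pole consisting of all points on $\mathbb{S}^2$ with $z$-coordinate at least $h_1 = 1 - 2/N$. The (spherical) radius of $R_N$ is then the geodesic distance from the North pole $\mathcal{N} = (0,0,1)$ to any boundary point, i.e. to any point $x \in \mathbb{S}^2$ with $\langle x, \mathcal{N} \rangle = h_1$.

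Next I would use that on the unit sphere this geodesic distance equals $\arccos(h_1) = \arccos(1 - 2/N)$. To convert this into the form appearing in the statement, I would apply the standard half-angle identity $\cos(2\alpha) = 1 - 2\sin^2(\alpha)$: setting $\alpha = \arcsin(1/\sqrt{N})$ gives $\cos(2\alpha) = 1 - 2/N$, hence
\begin{equation*}
\arccos\!\left(1 - \tfrac{2}{N}\right) = 2\arcsin\!\left(\tfrac{1}{\sqrt{N}}\right).
\end{equation*}

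Finally, for the asymptotic, I would invoke the Taylor expansion $\arcsin(t) = t + O(t^3)$ as $t \to 0$ with $t = 1/\sqrt{N}$, which yields $2\arcsin(1/\sqrt{N}) = 2/\sqrt{N} + O(N^{-3/2})$ and delivers the stated approximation for large $N$. There is no real obstacle here; the only small subtlety worth flagging is that the notion of radius being used is the intrinsic (spherical) one rather than the Euclidean chord length, which would instead give exactly $\sqrt{2(1-h_1)} = 2/\sqrt{N}$ without any approximation.
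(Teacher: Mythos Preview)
Your proposal is correct and is exactly the kind of trigonometric computation the paper has in mind; in fact the paper's own proof reads in full ``The proof consists on some trigonometric computations and is left to the reader,'' so you have simply carried out those computations.
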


\begin{proof}
The proof consists on some trigonometric computations and is left to the reader.
\end{proof}

\begin{prop}\label{prop_bounded_regions}
For every rectangular region $R_{j}^{i}$, the length of the horizontal sides (those parallels to the equator) is bounded by 
\begin{equation*}
\frac{d_{1}}{\sqrt{N}}
<
\text{length of the horizontal sides of } R_{j}^{i}<
\frac{d_{2}}{\sqrt{N}},
\end{equation*}
where  $d_{1},d_{2} \in \mathbb{R}_{+}$ are fix constants depending only on the choice of parameters $n, t_{\ell}, \alpha_{\ell}, \beta_{\ell}$, $1\leq \ell \leq n$.
\end{prop}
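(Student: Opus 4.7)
The plan is to write each horizontal side of $R_j^i$ as an arc of a parallel at some height $h$, use the explicit formula $h_j=1-2N_j/N$ to reduce the bound to an inequality involving $r_j$ and $N_j$, and then invoke Lemma \ref{lem_Nj} to close the gap.

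First I would observe that the horizontal side of $R_j^i$ at height $h$ is an arc of the parallel at height $h$ spanning the azimuthal angle $2\pi/r_j$, so its length equals $\tfrac{2\pi}{r_j}\sqrt{1-h^2}$. Substituting the explicit expression $h_j=1-2N_j/N$ from Definition \ref{defn_ARP} yields
\begin{equation*}
1-h_j^2=(1-h_j)(1+h_j)=\frac{2N_j}{N}\Bigl(2-\frac{2N_j}{N}\Bigr)=\frac{4\,N_j(N-N_j)}{N^2},
\end{equation*}
so the length of the horizontal side at height $h_j$ of $R_j^i$ is $\tfrac{4\pi\sqrt{N_j(N-N_j)}}{r_j\,N}$, and similarly at height $h_{j+1}$ one gets the same expression with $N_j$ replaced by $N_{j+1}=N_j+r_j$.

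Next I would show that for every $1\le j\le M$ one has $N/2\le N-N_j\le N$. The upper bound is trivial; the lower bound follows because by the symmetry $r(x)=r(2M-x)$ the Northern collars carry at most half of the points, quantitatively $N-2N_M=r_M>0$ (this is exactly the identity already used in the proof of Proposition \ref{thm_ARP}) and $N_j\le N_M$ by Remark \ref{rem_mono}. Combining this with Lemma \ref{lem_Nj}, which gives $k_1 r_j^2\le N_j\le k_2 r_j^2$, one obtains
\begin{equation*}
\frac{4\pi\sqrt{k_1/2}}{\sqrt{N}}
\;\le\;
\frac{4\pi\sqrt{N_j(N-N_j)}}{r_j\,N}
\;\le\;
\frac{4\pi\sqrt{k_2}}{\sqrt{N}},
\end{equation*}
which is the desired two-sided bound for the side at height $h_j$. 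For the side at height $h_{j+1}$ the same argument works once we write $N_{j+1}=N_j+r_j\le(k_2+1)r_j^2$ (using $r_j\ge r_1=\beta_1\ge 1$), and bound $N_{j+1}\ge N_j\ge k_1 r_j^2$ from below.

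Finally I would handle the equatorial collar $R_M^i$ separately: the identity $N-2N_M=r_M$ gives $1-h_M^2=(N^2-r_M^2)/N^2$, so the horizontal sides have length $\tfrac{2\pi\sqrt{N^2-r_M^2}}{r_M\,N}$. Lemma \ref{pos} together with Lemma \ref{lem_Nj} gives $r_M^2\asymp N$, and since $r_M^2\le 4A^2M^2\le (4A^2/a_1)N$ the factor $\sqrt{1-r_M^2/N^2}$ is pinched between two positive constants depending only on the parameters, yielding again a bound of the form $d_1/\sqrt{N}<\text{length}<d_2/\sqrt{N}$. The only real bookkeeping difficulty is making sure every constant introduced along the way depends only on $n$, the $t_\ell$, $\alpha_\ell$, $\beta_\ell$ (and hence on $A$ and $c$) and not on $j$ or $N$; this is where Lemma \ref{lem_Nj} is doing the heavy lifting and is the main obstacle to an otherwise elementary computation.
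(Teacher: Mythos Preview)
Your proposal is correct and takes essentially the same approach as the paper: write the side length as $\tfrac{2\pi}{r_j}\sqrt{1-h_j^2}$, substitute $h_j=1-2N_j/N$ to get $1-h_j^2=\tfrac{4}{N}\,N_j\bigl(1-\tfrac{N_j}{N}\bigr)$, use $\tfrac12<1-\tfrac{N_j}{N}<1$ for $1\le j\le M$, and close with Lemma~\ref{lem_Nj} to obtain $d_1=2\sqrt{2}\pi\sqrt{k_1}$ and $d_2=4\pi\sqrt{k_2}$. The only difference is organizational: the paper observes that for each $R_j^i$ the northern side (at height $h_j$) is the shorter of the two and therefore bounds only $\tfrac{2\pi}{r_j}\sqrt{1-h_j^2}$ over $1\le j\le M$, while you treat the side at $h_{j+1}$ and the equatorial collar $R_M^i$ as separate cases; your extra bookkeeping is harmless and, for the upper bound on the longer side, arguably more careful.
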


\begin{proof}
By the symmetry of the model, we only work with the regions of the North hemisphere and the equator $R_{M}^{i}$.
Note that for every rectangular region of the North hemisphere, the side parallel to the equator that is closer to the North pole is shorter than the one that is closer to the equator, see Figure \ref{fig:figure1}.
In the case $R_{M}^{i}$ they are equal.
So it is enough to prove that
\begin{equation*}
\frac{d_{1}}{\sqrt{N}}
<
\frac{2\pi \sqrt{1 - h_{j}^{2}}}{r_{j}}
<
\frac{d_{2}}{\sqrt{N}}
\end{equation*}
for $1 \leq j \leq M$.
We develope this expression:
\begin{equation*}
\frac{2\pi \sqrt{1 - h_{j}^{2}}}{r_{j}}
>
\frac{d_{1}}{\sqrt{N}}
\iff
N(1 - h_{j}^{2})
>
\frac{d_{1}^2}{4\pi^2} r_{j}^{2}
\iff
N_{j}\left(1 - \frac{N_{j}}{N}\right)
>
\frac{d_{1}^2}{16\pi^2} r_{j}^{2}.
\end{equation*}
Since $1 \leq j \leq M$, we have that
\begin{equation*}
\frac{1}{2}
<
1 - \frac{N_{j}}{N}
<
1
\end{equation*}
Appliying Proposition \ref{lem_Nj} we have
\begin{equation*}
N_{j}\left(1 - \frac{N_{j}}{N}\right)
>
\frac{N_{j}}{2}
\geq
\frac{k_{1}}{2} r_{j}^2
\end{equation*}
and so it is enough to take $d_{1} = 2\sqrt{2}\pi\sqrt{k_{1}}$.
On the other hand, 
\begin{equation*}
N_{j}\left(1 - \frac{N_{j}}{N}\right)
<
N_{j}
\leq
k_{2} r_{j}^2,
\end{equation*}
and so we take $d_{2}=4\pi\sqrt{k_{2}}$.
\end{proof}

\begin{cor}\label{well_separated}
The heights of the rectangles $R_{j}^{i}$ for $1\leq j\leq M$ of the area regular partition  are also bounded by
\begin{equation*}
\frac{e_{1}}{\sqrt{N}}
<
\text{length of the vertical sides of } R_{j}^{i}<
\frac{e_{2}}{\sqrt{N}},
\end{equation*}
where $e_{1},e_{2}\in\mathbb{R}_{+}$ depend only on the choice of parameters $n, t_{\ell}, \alpha_{\ell}, \beta_{\ell}$, $1\leq \ell \leq n$.
\end{cor}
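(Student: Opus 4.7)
The vertical side of $R_j^i$ for $1\leq j\leq M-1$ has length $\arccos(h_{j+1})-\arccos(h_j)$, and for $j=M$ has length $\pi-2\arccos(h_M)=2\arcsin(h_M)$. The plan is to exploit the identity $h_j-h_{j+1}=2r_j/N$, which is immediate from Definition \ref{defn_ARP}, combined with Proposition \ref{prop_bounded_regions}, which already controls the ratio $\sqrt{1-h_j^2}/r_j$ on every parallel.

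For $1\leq j\leq M-1$ I would apply the mean value theorem to $\arccos$ to write
\begin{equation*}
\arccos(h_{j+1})-\arccos(h_j)=\frac{h_j-h_{j+1}}{\sqrt{1-\xi^2}}=\frac{2r_j/N}{\sqrt{1-\xi^2}}
\end{equation*}
for some $\xi\in(h_{j+1},h_j)$. Since we are on the North hemisphere and $h_M=r_M/N>0$, the heights lie in $(0,1)$ and $\sqrt{1-h_j^2}\leq\sqrt{1-\xi^2}\leq\sqrt{1-h_{j+1}^2}$. Proposition \ref{prop_bounded_regions} gives $r_j/\sqrt{1-h_j^2}<2\pi\sqrt{N}/d_1$, which instantly yields the upper bound $2r_j/(N\sqrt{1-h_j^2})<4\pi/(d_1\sqrt{N})$, so one can take $e_2=4\pi/d_1$. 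For the lower bound, Proposition \ref{prop_bounded_regions} applied at index $j+1$ gives $r_{j+1}/\sqrt{1-h_{j+1}^2}>2\pi\sqrt{N}/d_2$; to rewrite this in terms of $r_j$, I use Remark \ref{rem_mono} together with the slope bound $\beta_\ell\leq A$ from Definition \ref{defn_Diamond}, which forces $r_{j+1}\leq r_j+A$, hence $r_j/r_{j+1}\geq 1/(1+A)$ since $r_j\geq\beta_1\geq 1$. This delivers a lower bound of the desired order $1/\sqrt{N}$.

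The case $j=M$ is treated separately. The vertical length is $2\arcsin(r_M/N)$. Proposition \ref{prop_bounded_regions} at $j=M$ combined with the trivial inequality $\sqrt{1-h_M^2}\leq 1$ shows that $r_M$ is of order $\sqrt{N}$, so $r_M/N$ is of order $1/\sqrt{N}$; the elementary bounds $x\leq\arcsin(x)\leq\pi x/2$ for $x\in[0,1]$ then convert the estimates on $r_M/N$ into two-sided bounds on $2\arcsin(r_M/N)$. Taking the worst of the constants produced in the two cases gives $e_1,e_2$ depending only on the parameters.

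The only genuine subtlety is the transition from a statement about $r_{j+1}/\sqrt{1-h_{j+1}^2}$ to one about $r_j/\sqrt{1-h_{j+1}^2}$: one might fear losing a factor that blows up with $j$. However, the bounded-slope assumption $\beta_\ell\leq A$ together with $r_j\geq 1$ confines $r_{j+1}/r_j$ to a fixed interval, so this step costs only a constant factor. Everything else is either a direct application of Proposition \ref{prop_bounded_regions} or elementary trigonometric estimates.
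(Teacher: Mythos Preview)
The paper states this Corollary without proof, so there is nothing to compare against line by line. Your argument via the mean value theorem for $\arccos$ is correct and is a perfectly natural way to derive the result from Proposition~\ref{prop_bounded_regions}.

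The presumably intended one-line argument is the ``area divided by width'' observation: each $R_j^i$ has area $4\pi/N$, and writing this area as
\[
\frac{4\pi}{N}
=
\int_{\arccos h_j}^{\arccos h_{j+1}} \frac{2\pi}{r_j}\sin\theta\,d\theta
=
(\text{vertical side})\cdot \frac{2\pi\sqrt{1-\xi^2}}{r_j}
\]
for some $\xi\in(h_{j+1},h_j)$ (mean value theorem for integrals), one sees that the second factor lies between the two horizontal side lengths of $R_j^i$, hence in $\bigl(d_1/\sqrt{N},\,d_2/\sqrt{N}\bigr)$ by Proposition~\ref{prop_bounded_regions}; dividing gives the vertical bounds directly. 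Your route is the same computation unpacked: you apply MVT to $\arccos$ rather than to the area integral, which amounts to the same $\xi$. The one place where your version does a bit more work is the lower bound, where you pass from $r_{j+1}/\sqrt{1-h_{j+1}^2}$ to $r_j/\sqrt{1-h_{j+1}^2}$ via the ratio $r_j/r_{j+1}\geq 1/(1+A)$. This is correct; note however that the \emph{statement} of Proposition~\ref{prop_bounded_regions} already asserts the upper bound $d_2/\sqrt{N}$ for \emph{both} horizontal sides of $R_j^i$, so in particular for $\tfrac{2\pi}{r_j}\sqrt{1-h_{j+1}^2}$, which would give the lower bound on the vertical side immediately without the $r_{j+1}/r_j$ detour. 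Either way the conclusion stands.

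Your treatment of $j=M$ is also fine; the only step you leave implicit is that the lower bound on $\sqrt{1-h_M^2}$ follows from the upper bound $h_M=r_M/N\leq 2\pi/(d_1\sqrt{N})$ you have already obtained, so that $\sqrt{1-h_M^2}$ is bounded below by a constant for all $N$ beyond a fixed threshold (and the finitely many small $N$ are absorbed into the constants).
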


%We can also state that the points are somehow well situated inside their regions through the following proposition.

\begin{cor}\label{cor_diam}
The diameters of the rectangles $R_{j}^{i}$ for $1\leq j\leq M$ of the area regular partition are bounded by
\begin{equation*}
\frac{g_{1}}{\sqrt{N}}
<
\text{diam} \left( R_{j}^{i} \right)
<
\frac{g_{2}}{\sqrt{N}},
\end{equation*}
where $g_{1}, g_{2}\in \mathbb{R}_{+}$ depend only on the choice of parameters $n, t_{\ell}, \alpha_{\ell}, \beta_{\ell}$, $1\leq \ell \leq n$.
\end{cor}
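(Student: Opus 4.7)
The approach is to derive the diameter bounds directly from the side-length bounds established in Proposition \ref{prop_bounded_regions} and Corollary \ref{well_separated}, without redoing any geometric calculation. Before splitting into the two inequalities I would observe that every rectangle $R_j^i$ sits inside a spherical disc of radius $O(1/\sqrt N)$, so we are in a small-scale regime where spherical distance and the ambient Euclidean distance differ by a factor $1+o(1)$; this makes the triangle inequality and side-versus-chord comparisons clean.

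For the \emph{upper bound}, I would fix any two points $x,y \in R_j^i$ and connect them on $\mathbb{S}^2$ by the concatenation of an arc along the parallel through $x$ (of length at most the top or bottom horizontal side of $R_j^i$) followed by an arc along a meridian (of length at most the vertical side). The triangle inequality for the spherical distance gives
\begin{equation*}
d_{\mathbb{S}^2}(x,y) \;\leq\; \frac{d_2}{\sqrt N} + \frac{e_2}{\sqrt N},
\end{equation*}
so the diameter is at most $(d_2+e_2)/\sqrt N$, and I would take $g_2 = d_2 + e_2$.

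For the \emph{lower bound}, I would simply take $x$ and $y$ to be the two endpoints of the vertical side of $R_j^i$: these lie in the closed rectangle, so the diameter is at least their mutual distance. Since the vertical sides of $R_j^i$ are arcs of meridians, i.e.\ of great circles, their spherical length equals the intrinsic spherical distance between the endpoints. Corollary \ref{well_separated} then gives $\mathrm{diam}(R_j^i)\geq e_1/\sqrt N$, and I would take $g_1 = e_1$.

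The only minor subtlety is that the horizontal side of $R_j^i$ is an arc of a \emph{small} circle of Euclidean radius $\sqrt{1-h_j^2}$, not a great circle, so its spherical-length upper bound $d_2/\sqrt N$ is slightly larger than the actual spherical distance between its endpoints. This only strengthens the upper bound in the triangle inequality argument, so I would not need to quantify it; in particular the constants $g_1,g_2$ remain functions of the parameters $n,t_\ell,\alpha_\ell,\beta_\ell$ only, as required. I do not expect any real obstacle: the statement is essentially a packaging of Proposition \ref{prop_bounded_regions} and Corollary \ref{well_separated} through the triangle inequality.
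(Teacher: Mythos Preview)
Your argument is correct and is exactly the natural way to make this corollary precise. The paper itself does not give a proof of Corollary \ref{cor_diam} (nor of Corollary \ref{well_separated}); both are stated as immediate consequences of Proposition \ref{prop_bounded_regions}, so your proposal is in fact \emph{more} detailed than what the paper provides, but fully in line with its intended reasoning.
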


%------------------------------------------------------
%------------------------------------------------------
%------------------------------------------------------
%------------------------------------------------------

Corollary \ref{cor_diam} implies that the mesh norm of the Diamond ensemble is bounded by $\frac{g_{2}}{\sqrt{N}}$.
So in particular we can state that the Diamond ensemble is a good covering.

%------------------------------------------------------
%------------------------------------------------------
%------------------------------------------------------
%------------------------------------------------------

\subsection{A concrete example}\label{sub_concrete}

We consider in this section the simple model defined in \cite[Section 4.1]{EB18_3} and compute explicitly all the constants presented in the previous section.
Following the notation from Definition \ref{defn_Diamond}, we choose $n=1$ and $r_j=4j$ for $1\leq j\leq M$. 
Then, for all $j\in\{1,\ldots,M\}$ we have
\[
z_j=1-\frac{1+4j^2}{N-1}.
\] 
The number of parallels is $2M-1$ and the number of points is 
\begin{equation*}
N
=
2 -4M+ 2\sum_{j=1}^{M} 4j
=
2 + 4M^2.
\end{equation*}
\begin{equation*}
N_{j}
=
1 + \sum_{k=1}^{j-1} 4k
=
2j^2 - 2j + 1
.
\end{equation*}
We consider the partition of $\mathbb{S}^{2}$ defined in Definition \ref{defn_ARP} 
where
	\begin{equation*}
	h_{j}
	=
	1 - \frac{2}{N} - \frac{4j(j-1)}{N}
	=
	\frac{-4}{N} j^{2} + \frac{4}{N} j + \left( 1 - \frac{2}{N} \right),
	\end{equation*}
for $1\leq j \leq M$ and is given by the recurrence relation:
	\begin{equation*}
	h_{j+1}
	=
	h_{j} - \frac{8j}{N}.
	\end{equation*}
We can write again Proposition \ref{prop_bounded_regions} with explicit constants $d_{1}$ and $d_{2}$.
%------------------------------------------------------
%------------------------------------------------------
%------------------------------------------------------
%------------------------------------------------------

\begin{prop}\label{prop_bound_concrete}
For every rectangular region $R_{j}^{i}$ from the area regular partition described above, the length of the horizontal sides (those parallels to the equator) is bounded by 
\begin{equation*}
\frac{\pi}{\sqrt{2}}\frac{1}{\sqrt{N}} 
<
\text{length of the horizontal sides of } R_{j}^{i}
<
\frac{\pi\sqrt{2}}{\sqrt{N}}.
\end{equation*}
\end{prop}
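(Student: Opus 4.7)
The plan is to follow the same template as Proposition~\ref{prop_bounded_regions}, substituting in the explicit values $r_j=4j$, $N_j=2j^2-2j+1$ and $N=4M^2+2$, and then reduce everything to an elementary polynomial inequality in $j$ and $M$.

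First I would use the factorisation $1-h_j^2=(1-h_j)(1+h_j)$ together with the formula $h_j=1-\frac{2N_j}{N}$ from Definition~\ref{defn_ARP} to obtain the clean identity
\begin{equation*}
1-h_j^2=\frac{4N_j(N-N_j)}{N^2}.
\end{equation*}
Thus the horizontal side at parallel height $h_j$ has length
\begin{equation*}
\frac{2\pi\sqrt{1-h_j^2}}{r_j}=\frac{4\pi}{r_j N}\sqrt{N_j(N-N_j)}.
\end{equation*}
Squaring, multiplying through by $N r_j^2/\pi^2$, and using $r_j^2=16j^2$, the claimed bounds $\frac{\pi}{\sqrt{2}\sqrt{N}}<\cdot<\frac{\pi\sqrt{2}}{\sqrt{N}}$ become the equivalent pair of inequalities
\begin{equation*}
\frac{Nj^2}{2}<N_j(N-N_j)<2Nj^2.
\end{equation*}

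For the upper bound I would just observe that $N_j=2j^2-2j+1<2j^2$ for all $j\geq 1$ and that $N-N_j<N$ trivially, so the product is strictly less than $2j^2\cdot N$. For the lower bound I would split the two factors: the inequality $N_j\geq j^2$ is equivalent to $(j-1)^2\geq 0$ and therefore holds for every $j\geq 1$; the inequality $N-N_j\geq N/2$ is equivalent to $N_j\leq N/2$, and since the map $j\mapsto N_j$ is increasing its maximum on $\{1,\ldots,M\}$ is $N_M=2M^2-2M+1$, which is strictly less than $N/2=2M^2+1$ as soon as $M\geq 1$. Multiplying these two factor-bounds yields $N_j(N-N_j)>\frac{Nj^2}{2}$ with strict inequality.

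Finally, I should note that the proposition statement concerns the length of the horizontal sides of $R_j^i$, so in principle one has two heights $h_j$ and $h_{j+1}$ to worry about; however, the argument above is uniform in the index and applies equally to the side at height $h_{j+1}$, which is handled by the same calculation with $j$ replaced by $j+1$ (and the case of the equatorial collar $R_M^i$ reduces to the height $h_M$). I do not foresee any genuine obstacle here; the only thing to be careful with is that the bounds $N_j\leq 2j^2$ and $N_M\leq N/2$ are strict for $j\geq 1$ and $M\geq 1$ respectively, so the strict inequalities in the statement are indeed preserved.
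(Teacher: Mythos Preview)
Your proof is correct and follows essentially the same route as the paper: both compute $\frac{2\pi\sqrt{1-h_j^2}}{r_j}$ via the factorisation $1-h_j^2=\frac{4N_j(N-N_j)}{N^2}$ and then bound the two factors $N_j$ and $N-N_j$ separately against $j^2$ and $N$, arriving at the identical constants $\pi/\sqrt{2}$ and $\pi\sqrt{2}$. Your bookkeeping in terms of $N_j$ and $N$ is a touch cleaner than the paper's explicit polynomials in $j$ and $M$, but the substance is the same; note only that your closing remark about ``replacing $j$ by $j+1$'' for the lower side is slightly imprecise (the angular width of $R_j^i$ remains $2\pi/r_j$, not $2\pi/r_{j+1}$), though the paper itself bounds only the side at height $h_j$ and the subsequent application uses only the lower bound $d_1$.
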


\begin{proof}
As in proof of Proposition \ref{prop_bounded_regions}, we consider the quantity
\begin{equation*}
\frac{2\pi \sqrt{1 - h_{j}^{2}}}{r_{j}}
=
\frac{2\pi \sqrt{\frac{4N_{j}}{N}\left( 1 - \frac{N_{j}}{N}\right)}}{4j}
=
\frac{\pi }{N}
\sqrt{\frac{\left( 2j^2 - 2j + 1 \right)\left( 4M^2 - 2j^2 + 2j + 1 \right)}{j^2}}.
\end{equation*}
First we bound
\begin{equation*}
1
\leq
\frac{2j^2 - 2j + 1}{j^2}
<
2
\end{equation*}
for all $1\leq j \leq M$.
On the other hand,
\begin{equation*}
2M^2 + 2M + 1
\leq
4M^2 - 2j^2 + 2j + 1
\leq 
4M^2  + 1
\end{equation*}
for all $1\leq j \leq M$.
Since all quantities are positive, we have
\begin{equation*}
\sqrt{2M^2 + 2M + 1}
\leq
\sqrt{(4M^2 - 2j^2 + 2j + 1)\left( \frac{2j^2 - 2j + 1}{j^2}
 \right)}
<
\sqrt{2(4M^2  + 1)}.
\end{equation*}
We rewrite the expressions in terms of $N$
\begin{equation*}
\frac{\pi}{N}\sqrt{2M^2 + 2M + 1}
=
\frac{\pi}{N}\sqrt{\frac{N}{2} + \sqrt{N-2}}
\geq
\frac{\pi}{\sqrt{2}}\frac{1}{\sqrt{N}} 
\end{equation*}
and
\begin{equation*}
\frac{\pi}{N}\sqrt{2(4M^2  + 1)}
=
\frac{\pi}{N}\sqrt{2N}
=
\frac{\pi\sqrt{2}}{\sqrt{N}}.
\end{equation*}

\end{proof}

We can easily deduce bounds for the other quantities for this model as in Corolaries \ref{well_separated} and \ref{cor_diam}.

%------------------------------------------------------
%------------------------------------------------------
%------------------------------------------------------
%------------------------------------------------------

%------------------------------------------------------
%------------------------------------------------------
%------------------------------------------------------
%------------------------------------------------------

%------------------------------------------------------------
%------------------------------------------------------------
%------------------------------------------------------------
%------------------------------------------------------------

\section{Proof of Theorem \ref{thm_menor_que}}\label{sec_Proof_menor_que}

As we mentioned before, to prove Theorem \ref{thm_menor_que} we follow the general lines of the proof proposed in \cite[Theorem 24D]{87e609866d654db2aa5e674048289af3}.
\medskip

Given a family of points coming from the Diamond ensemble for some choice of parameters $n,t_{\ell},\alpha_{\ell},\beta_{\ell}$ for $1\leq \ell \leq n$, we consider the associated area regular partition given in Definition \ref{defn_ARP}.
Let us take a spherical cap on the sphere $\mathbb{S}^2$ and denote it by $C$.
We can split 
\begin{equation*}
C = \tilde{C} \cup \dot{C}
\end{equation*}
where $\dot{C}$ is the union of all the regions of the area regular partition that are completelly contained in $C$.
Therefore, $\tilde{C}$ is the union of all the regions of the area regular partition that are partially contained in $C$ intersected with $C$.
Then we have:
\begin{multline*}
 D_{\text{sup},\text{cap}} (\diamond\left( N \right)) 
=
\sup\limits_{C \in \text{cap}}
\left|
\frac{\diamond\left( N \right) \cap C}{N}
-
\frac{\mu(C)}{4\pi}
\right|
\\
=
\sup\limits_{C \in \text{cap}}
\left|
\frac{\diamond\left( N \right) \cap \tilde{C}}{N}
+
\frac{\diamond\left( N \right) \cap \dot{C}}{N}
-
\frac{\mu(\tilde{C})}{4\pi}
-
\frac{\mu(\dot{C})}{4\pi}
\right|.
\end{multline*}
Since we are taking an area regular partition, we have
\begin{equation*}
\frac{\diamond\left( N \right) \cap \dot{C}}{N}
=
\frac{\mu(\dot{C})}{4\pi}
\end{equation*}
and so,
\begin{equation*}
 D_{\text{sup},\text{cap}} (\diamond\left( N \right)) 
=
\sup\limits_{C \in \text{cap}}
\left|
\frac{\diamond\left( N \right) \cap \tilde{C}}{N}
-
\frac{\mu(\tilde{C})}{4\pi}
\right|.
\end{equation*}
Now let us prove that the border of any spherical cap $C$ pass through at most $k\sqrt{N}$ different regions of our partition, with $k\in\mathbb{R}_{+}$ depending only on the choice of parameters $n, t_{\ell}, \alpha_{\ell}, \beta_{\ell}$, $1\leq \ell \leq n$.
In order to do so, we consider the intersection of the border of our spherical cap, that we wil denote by $\mathcal{C}$ and a collar $Z_{j} = \displaystyle\cup_{i=1}^{r_{j}} R_{j}^{i}$.
\begin{equation*}
\mathcal{L}_{j}
=
Z_{j}\cap \mathcal{C}, 
\end{equation*}
and we consider the lenght of $\mathcal{L}_{j}$, that we denote by $|\mathcal{L}_{j}|$.
Note that $\mathcal{C}$ can pass through each $Z_{j}$ at most twice non consecutive times, see Figure \ref{fig:figure2}.
Then the number of regions that $\mathcal{L}_{j}$ pass through, that we denote by $N(\mathcal{L}_{j})$, is bounded by:
\begin{equation*}
N(\mathcal{L}_{j})
\leq
4+
\frac{|\mathcal{L}_{j}|}{\frac{d_{1}}{\sqrt{N}}}
\end{equation*}
with $d_{1}$ as in Proposition \ref{prop_bounded_regions}.
So, the number of regions that the border of $C$ pass through is bounded by
\begin{multline*}
\sum_{j=1}^{2M-1}
N(\mathcal{L}_{j})
\leq
\sum_{j=1}^{2M-1}
\left(
4+
\frac{|\mathcal{L}_{j}|}{\frac{d_{1}}{\sqrt{N}}}
\right)
=
4(2M-1) + 
\frac{\sqrt{N}}{d_{1}}
\sum_{j=1}^{2M-1}
|\mathcal{L}_{j}| 
\\
\leq
4(2M-1) + 
\frac{2\pi}{d_{1}}\sqrt{N}
\leq
\frac{8}{\sqrt{a_{1}}}\sqrt{N} - 4 + 
\frac{2\pi}{d_{1}}\sqrt{N}
\leq
\left(
\frac{8}{\sqrt{a_{1}}}
+
\frac{2\pi}{d_{1}}
\right)
\sqrt{N},
\end{multline*}
where we have used Lemma \ref{pos} to bound $M$.
\begin{figure}[htp]
\centering
\includegraphics[width=.33\textwidth]{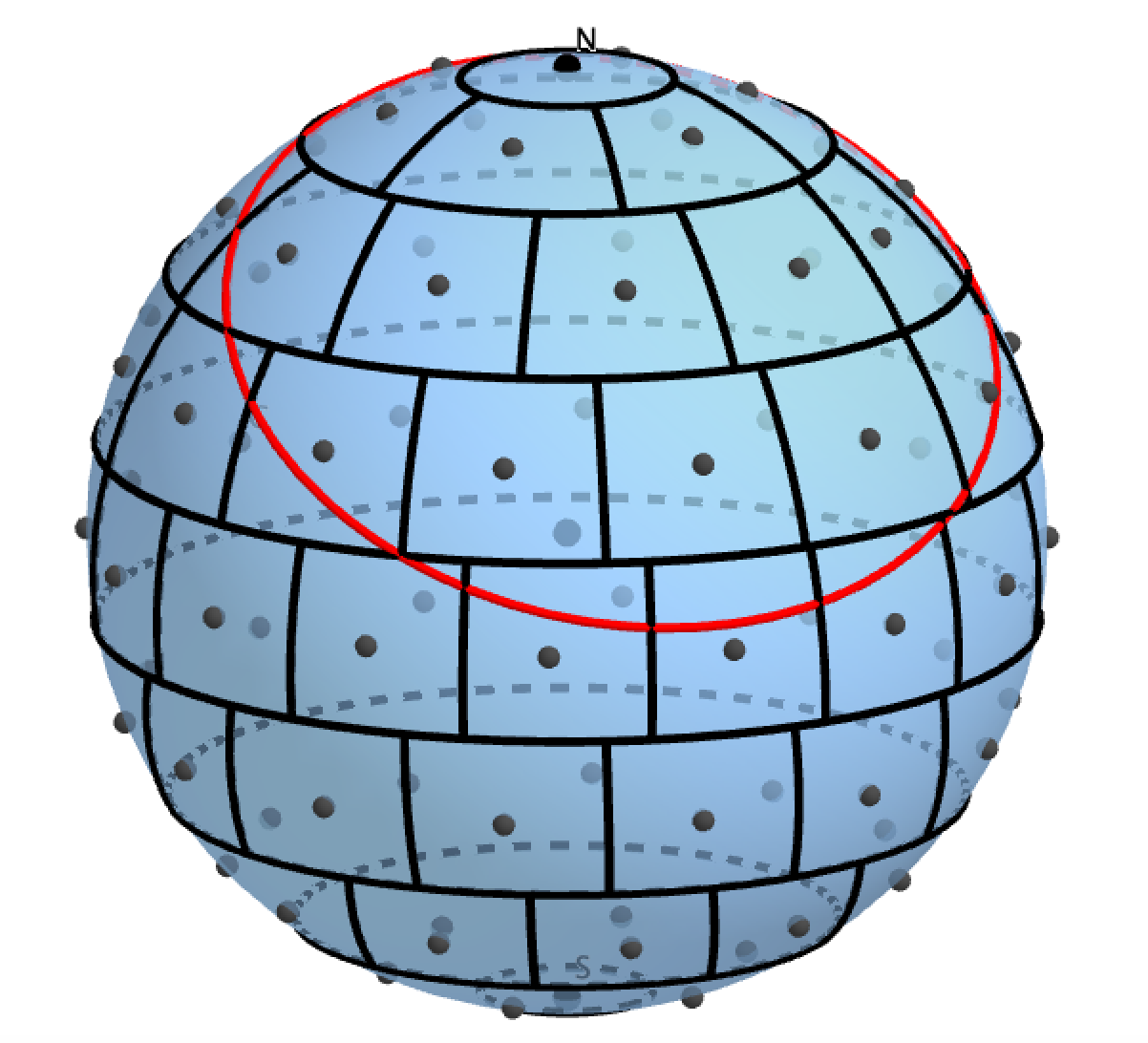}\hfill
\includegraphics[width=.32\textwidth]{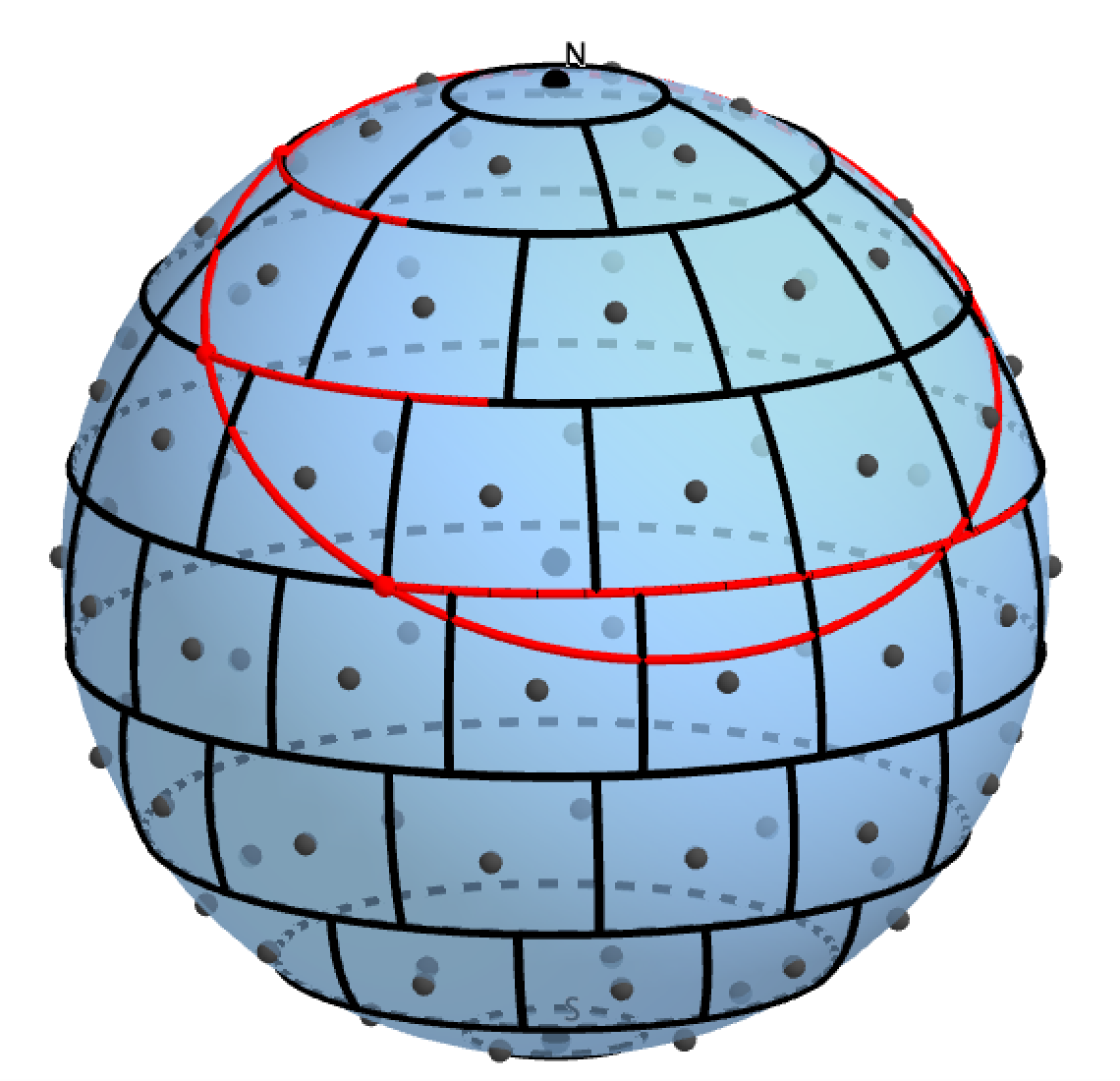}\hfill
\includegraphics[width=.33\textwidth]{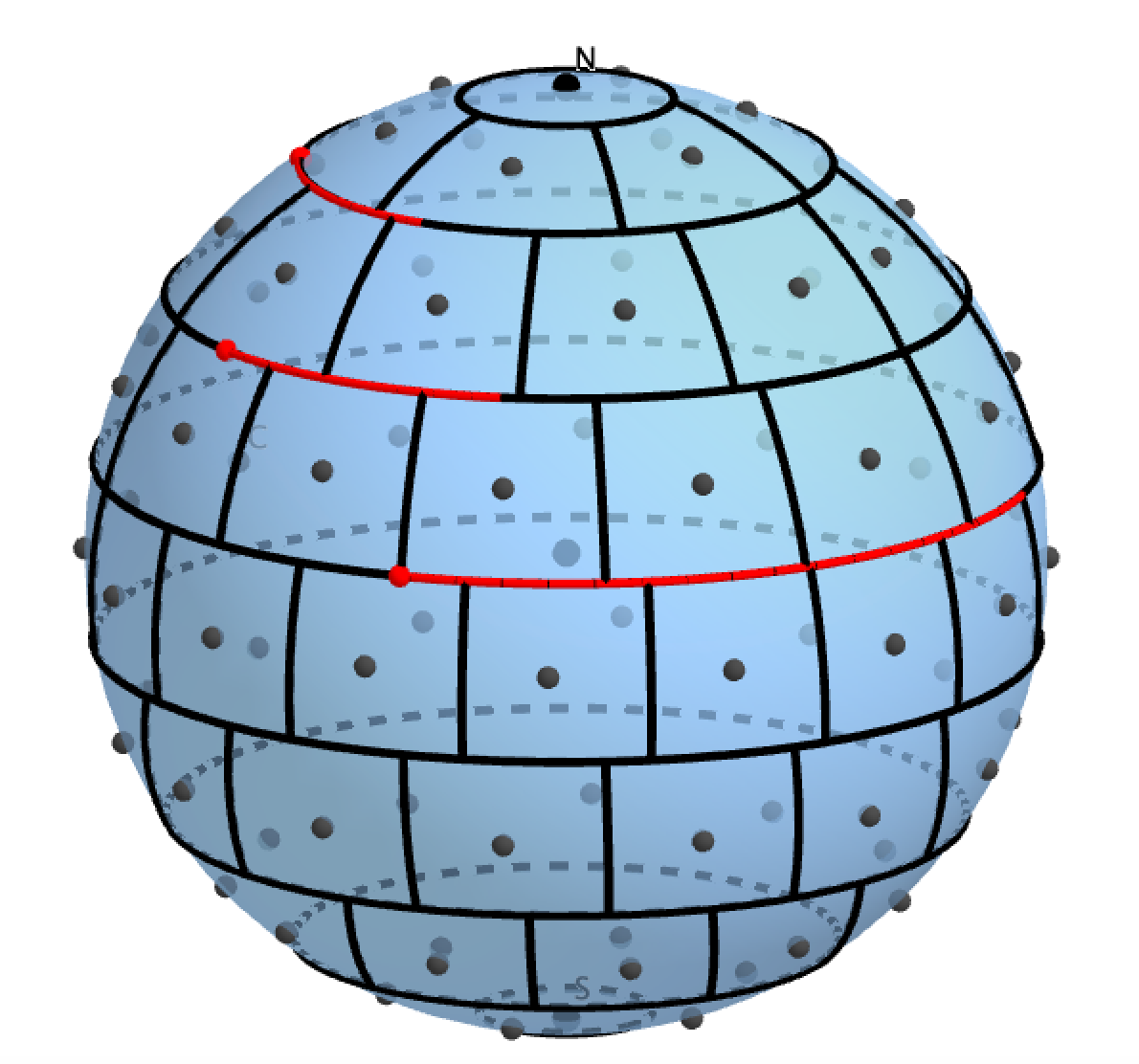}
\caption{Decomposition of the border of the spherical cap.}
\label{fig:figure2}
\end{figure}
Since every region has area $\frac{4\pi}{N}$ we conclude that
\begin{equation*}
 D_{\text{sup},\text{cap}} (\diamond\left( N \right) 
=
\sup\limits_{C \in \text{cap}}
\left|
\frac{\diamond\left( N \right) \cap \tilde{C}}{N}
-
\frac{\mu(\tilde{C})}{4\pi}
\right|
\leq
\left(
\frac{8}{\sqrt{a_{1}}}
+
\frac{2\pi}{d_{1}}
\right)
\frac{1}{\sqrt{N}}.
\end{equation*}

Note that we are not taking into account the regions containing the North or the South pole since they are meaningless for the asymptotics.

%------------------------------------------------------------
%------------------------------------------------------------
%------------------------------------------------------------
%------------------------------------------------------------

\section{Proof of Theorem \ref{thm_mayor_que}}\label{sec_Proof_mayor_que}

For proving Theorem \ref{thm_mayor_que} we consider the very specific spherical cap consisting on the upper half semisphere containing the line of the equator.
Then the expression
\begin{equation*}
\left|
\frac{\diamond\left( N \right) \cap C}{N}
-
\frac{\mu(C)}{4\pi}
\right|
\end{equation*}
can be simplified.
For the simetry of the model, 
\begin{equation*}
\diamond\left( N \right) \cap C
=
\frac{N}{2} + \frac{r_{M}}{2}
\end{equation*}
where by $r_{M}$ we denote the number of points that lie in the equator and $\frac{\mu(C)}{4\pi} = \frac{1}{2}$
Then we have
\begin{equation*}
\left|
\frac{\diamond\left( N \right) \cap C}{N}
-
\frac{\mu(C)}{4\pi}
\right|
=
\left|
\frac{\frac{N}{2} + \frac{r_{M}}{2}}{N}
-
\frac{1}{2}
\right|
=
\left|
\frac{1}{2}
+
\frac{r_{M}}{2N}
-
\frac{1}{2}
\right|
=
\frac{r_{M}}{2N}.
\end{equation*}
%By definition, see \ref{defn_Diamond} we have that
%\begin{equation*} 
%r_{M}
%=
%\alpha_{n} + \beta_{n}M,
%\end{equation*}
%with $\alpha_{n}, \beta_{n} \geq 0$ and $\beta_{n}\leq A$, $\alpha_{n} \leq MA$.
%And we also know that
%\begin{equation*}
%N
%=
%2-(\alpha_n+\beta_nM)+2\sum_{\ell=1}^n\;\sum_{j=t_{\ell-1}+1}^{t_\ell}(\alpha_\ell+\beta_\ell j)
%=
%K_{1}M^2 + K_{2}M + K_{3}
%.
%\end{equation*}
From Definition \ref{defn_Diamond} we know that
\begin{equation*}
r_{M} 
\geq
r_{t_{1}}
\geq
cM,
\end{equation*}
and from Lemma \ref{pos} we have that
\begin{equation*}
N
\leq 
a_{2}M^2.
\end{equation*}
So, we conclude:
\begin{equation*}
\left|
\frac{\diamond\left( N \right) \cap C}{N}
-
\frac{\mu(C)}{4\pi}
\right|
=
\frac{r_{M}}{2N}
\geq
\frac{cM}{2N}
\geq
\frac{c\sqrt{N}}{2\sqrt{a_{2}}N}
=
\frac{c}{2\sqrt{a_{2}}}\frac{1}{\sqrt{N}}.
\end{equation*}
%which asymptotically reads
%\begin{equation*}
%\lim\limits_{M\longrightarrow\infty}
%\left|
%\frac{\diamond\left( N \right) \cap C}{N}
%-
%\mu(C)
%\right|
%=
%\lim\limits_{M\longrightarrow\infty}
%\frac{\alpha_{n} + \beta_{n}M}{K_{1}M^2 + K_{2}M + K_{3}}
%=
%\frac{C_2}{M}
%=
%\frac{C_2}{\sqrt{N}} + o\left( \frac{1}{\sqrt{N}} \right).
%\end{equation*}
Then, it is enough to take $c_{1} = \frac{c}{2\sqrt{a_{2}}}$ to conclude that
\begin{equation*}
 D_{\text{sup},\text{cap}} (\diamond\left( N \right)) 
=
\sup\limits_{C \in \text{cap}}
\left|
\frac{\diamond\left( N \right) \cap C}{N}
-
\frac{\mu(C)}{4\pi}
\right|
\geq
\frac{c_{1}}{\sqrt{N}}.
\end{equation*}

%------------------------------------------------------------
%------------------------------------------------------------
%------------------------------------------------------------
%------------------------------------------------------------

\section{Proof of Theorem \ref{thm_concrete}}\label{sec_Proof_concrete}

As for Theorem \ref{thm_main}, we split the proof on Theorem \ref{thm_concrete} into two lemmas.

\begin{lem}\label{lem1}
Let $\diamond (N)$ be the Diamond ensemble defined by $n=1$ and $r_j=4j$ for $1\leq j\leq M$. 
Then
\begin{equation*}
D_{\text{sup},\text{cap}}\left(\diamond (N)\right)
<
\frac{
4+ 2\sqrt{2}
}{\sqrt{N}}
.
\end{equation*}
\end{lem}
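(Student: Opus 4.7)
The plan is to specialize the general upper bound argument already carried out in Section~\ref{sec_Proof_menor_que} to the concrete parameters $n=1$, $r_j=4j$, making every constant explicit. The key input is that Proposition~\ref{prop_bound_concrete} already pins down the horizontal side length of each rectangle $R_j^i$ in this model to within the sharp constants $d_1=\pi/\sqrt{2}$ and $d_2=\pi\sqrt{2}$, so there is nothing left to estimate about the partition itself.

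First, I would fix a spherical cap $C$ and reuse verbatim the decomposition $C=\tilde C\cup\dot C$ from Section~\ref{sec_Proof_menor_que}, together with the fact that $\dot C$ (the union of regions fully contained in $C$) contributes zero to the discrepancy because the partition is area-regular. Thus the task reduces to counting how many regions $\mathcal{C}=\partial C$ can meet. Passing collar by collar $Z_j=\cup_i R_j^i$, the circular arc $\mathcal{C}\cap Z_j$ has at most two connected components (entering and leaving the collar), and within each component the number of regions crossed is bounded by $2+|\mathcal{L}_j|/(d_1/\sqrt{N})$ by Proposition~\ref{prop_bound_concrete}, so that in total $N(\mathcal{L}_j)\leq 4+|\mathcal{L}_j|\sqrt{N}/d_1$.

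Next I would substitute the concrete quantities. Since $\sum_j|\mathcal{L}_j|=|\mathcal{C}|\leq 2\pi$ and there are $2M-1$ collars, the number of regions hit by $\mathcal{C}$ is at most
\begin{equation*}
4(2M-1)+\frac{2\pi\sqrt{N}}{d_1}.
\end{equation*}
For this model $N=4M^2+2$, hence $2M=\sqrt{N-2}<\sqrt{N}$, giving $4(2M-1)<4\sqrt{N}-4$. With $d_1=\pi/\sqrt{2}$ we have $2\pi/d_1=2\sqrt{2}$. Each boundary region contributes at most $\max(1/N,4\pi/(N\cdot 4\pi))=1/N$ to $|(\diamond(N)\cap\tilde C)/N-\mu(\tilde C)/(4\pi)|$, so
\begin{equation*}
D_{\text{sup},\text{cap}}(\diamond(N))\leq\frac{4(2M-1)+2\sqrt{2}\sqrt{N}}{N}<\frac{4\sqrt{N}+2\sqrt{2}\sqrt{N}}{N}=\frac{4+2\sqrt{2}}{\sqrt{N}},
\end{equation*}
which is the desired inequality.

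I do not expect any real obstacle here: the work was done in the proof of Theorem~\ref{thm_menor_que} and in Proposition~\ref{prop_bound_concrete}. The only point one has to be slightly careful with is the counting step $N(\mathcal{L}_j)\leq 4+|\mathcal{L}_j|\sqrt{N}/d_1$, in particular the "$4$" coming from the at most two entries/exits of $\mathcal{C}$ per collar, because this is exactly what produces the $4/\sqrt{N}$ summand in the final bound and must combine cleanly with the sharp constant $2\pi/d_1=2\sqrt{2}$ from Proposition~\ref{prop_bound_concrete} to yield the target constant $4+2\sqrt{2}$.
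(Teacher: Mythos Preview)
Your proposal is correct and follows essentially the same argument as the paper: specialize the proof of Theorem~\ref{thm_menor_que} using the explicit constant $d_1=\pi/\sqrt{2}$ from Proposition~\ref{prop_bound_concrete}, bound $4(2M-1)$ via $2M=\sqrt{N-2}<\sqrt{N}$, and combine with $2\pi/d_1=2\sqrt{2}$ to obtain $(4+2\sqrt{2})/\sqrt{N}$. The paper's write-up is terser but the steps and constants match yours exactly.
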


\begin{proof}
We follow the proof from Theorem \ref{thm_menor_que} and the bounds given in Proposition \ref{prop_bound_concrete}. 
Then we have the following bound
\begin{multline*}
\sum_{j=1}^{2M-1}
N(\mathcal{L}_{j})
<
\sum_{j=1}^{2M-1}
\left(
4+
\frac{|\mathcal{L}_{j}|}{\frac{\pi}{\sqrt{2}}\frac{1}{\sqrt{N}} }
\right)
=
4(2M-1) + 
\frac{\sqrt{2}}{\pi}\sqrt{N} 
\sum_{j=1}^{2M-1}
|\mathcal{L}_{j}| 
\\
\leq
4\sqrt{N-2}-4 + 
2\sqrt{2}\sqrt{N}
<
\left(
4+ 2\sqrt{2}
\right)\sqrt{N}
.
\end{multline*}
Then, we have
\begin{equation*}
 D_{\text{sup},\text{cap}} (\diamond\left( N \right)) 
=
\sup\limits_{C \in \text{cap}}
\left|
\frac{\diamond\left( N \right) \cap \tilde{C}}{N}
-
\frac{\mu(\tilde{C})}{4\pi}
\right|
\leq
\frac{
4+ 2\sqrt{2}
}{\sqrt{N}}
.
\end{equation*}

\end{proof}

\begin{lem}\label{lem2}
Let $\diamond (N)$ be the Diamond ensemble defined by $n=1$ and $r_j=4j$ for $1\leq j\leq M$. 
Then
\begin{equation*}
D_{\text{sup},\text{cap}}\left(\diamond (N)\right)
\geq
\frac{1}{\sqrt{N}}+ o\left( \frac{1}{\sqrt{N}} \right).
\end{equation*}
\end{lem}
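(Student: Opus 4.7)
The plan is to specialise the argument of Theorem \ref{thm_mayor_que} to the concrete parameters $n = 1$, $r_j = 4j$. I take $C$ to be the closed upper hemisphere (containing the equator), the same test cap used there. By the up/down symmetry of the Diamond ensemble, together with the fact that the $r_M$ equatorial points are counted inside $C$, I obtain
\[
\diamond(N)\cap C = \frac{N + r_M}{2}, \qquad \frac{\mu(C)}{4\pi} = \frac{1}{2},
\]
so the discrepancy witnessed by this single cap is exactly $r_M/(2N)$. This step is already carried out in the proof of Theorem \ref{thm_mayor_que} and does not depend on the concrete choice of parameters.

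Next, I substitute the explicit values of the model. Since $N = 2 + 4M^2$ and $r_M = 4M$, I have $M = \tfrac{1}{2}\sqrt{N-2}$, whence
\[
\frac{r_M}{2N} = \frac{2\sqrt{N-2}}{2N} = \frac{\sqrt{N-2}}{N} = \frac{1}{\sqrt{N}}\sqrt{1 - \frac{2}{N}}.
\]
A first-order Taylor expansion $\sqrt{1 - 2/N} = 1 - 1/N + O(N^{-2})$ then yields
\[
\frac{r_M}{2N} = \frac{1}{\sqrt{N}} - \frac{1}{N^{3/2}} + O\!\bigl(N^{-5/2}\bigr) = \frac{1}{\sqrt{N}} + o\!\left(\frac{1}{\sqrt{N}}\right).
\]
Since the supremum in the definition of $D_{\text{sup},\text{cap}}$ bounds the value at the particular cap $C$ from above, the claimed lower bound follows.

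I do not expect a real obstacle here. The whole argument is an elementary specialisation: Theorem \ref{thm_mayor_que} supplies the identity $r_M/(2N)$ at the hemisphere, Section \ref{sub_concrete} supplies the parameter values $r_M = 4M$ and $N = 2 + 4M^2$, and the rest is a one-line asymptotic expansion. The only point to keep in mind is that the correction $-1/N^{3/2}$ is negative but is absorbed into $o(1/\sqrt{N})$, which is consistent with the way the bound is stated in the lemma.
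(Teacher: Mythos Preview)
Your proof is correct and essentially the same as the paper's: the paper considers the family of North-pole caps with boundary at each $z_j$ and shows the maximum discrepancy over $1\le j\le M$ is attained at $j=M$, where $z_M=0$, which is precisely your closed upper hemisphere. Both computations yield the identical value $\sqrt{N-2}/N$; you simply bypass the optimization over $j$ by going straight to the extremal cap.
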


\begin{proof}
We are going to consider a subfamily of spherical caps in $\mathbb{S}^2$ formed by the caps that are centered at the North pole and whose border is one of the parallels where we have chosen the points, i.e. one of the parallels defined by the $z_{j}$'s.
For the symmetry of the model, it is enough to consider $1\leq j \leq M$.
The discrepancy for these particular caps reads
\begin{multline*}
\sup\limits_{1\leq j \leq M}
\left|
\frac{\diamond\left( N \right) \cap C}{N}
-
\frac{\mu(C)}{4\pi}
\right|
=
\sup\limits_{1\leq j \leq M}
\left|
\frac{N_{j+1}}{N}
-
\frac{2\pi(1-z_{j})}{4\pi}
\right|
\\
=
\sup\limits_{1\leq j \leq M}
\left|
\frac{N-2 - 4j^2 + 4(N-1)j}{2N(N-1)}
\right|,
\end{multline*}
where $N-2 - 4j^2 + 4(N-1)j>0$ for all $1\leq j \leq M$, and $f(x) = N-2 - 4x^2 + 4(N-1)x$ is an increasing function in the interval $[1,M]$, so
\begin{multline*}
\sup\limits_{1\leq j \leq M}
\left|
\frac{\diamond\left( N \right) \cap C}{N}
-
\frac{\mu(C)}{4\pi}
\right|
\\
=
\frac{N-2 - 4M^2 + 4(N-1)M}{2N(N-1)}
=
\frac{\sqrt{N-2}}{N}
=
\frac{1}{\sqrt{N}} + o\left( \frac{1}{\sqrt{N}} \right).
\end{multline*}

\end{proof}

%------------------------------------------------------------
%------------------------------------------------------------
%------------------------------------------------------------
%------------------------------------------------------------

\subsection*{Acknowledgements}

I would like to thank Peter Grabner for our discussions on the topic and for introducing me to the book \cite{87e609866d654db2aa5e674048289af3}, it was such a nice lecture.

%------------------------------------------------------------
%------------------------------------------------------------
%------------------------------------------------------------
%------------------------------------------------------------

\begin{bibdiv}

\begin{biblist}

\bib{Aistleitner2012}{article}{
author={C. Aistleitner},
author={J. S. Brauchart},
author={J. Dick},
title={Point Sets on the Sphere $\mathbb{S}^2$ with Small Spherical Cap Discrepancy},
journal={Discrete \& Computational Geometry},
year={2012},
month={Dec},
day={01},
volume={48},
number={4},
pages={990--1024}
}

\bib{Alexander1972}{article}{
author={R. Alexander},
title={On the sum of distances betweenn points on a sphere},
journal={Acta Mathematica Academiae Scientiarum Hungarica},
year={1972},
month={Sep},
day={01},
volume={23},
number={3},
pages={443--448},
issn={1588-2632},
}

\bib{EJP3733}{article}{
    AUTHOR = {Alishahi, K. }
    AUTHOR = { Zamani, M.},
    TITLE = {The spherical ensemble and uniform distribution of points on the sphere},
   JOURNAL = {Electron. J. Probab.},
  FJOURNAL = {Electronic Journal of Probability},
    VOLUME = {20},
      YEAR = {2015},
     PAGES = {23--27},
      ISSN = {1083-6489},
}

\bib{beck_1984}{article}{
title={Sums of distances between points on a sphere — an application of the theory of irregularities of distribution to discrete Geometry}, 
volume={31}, 
number={1}, 
journal={Mathematika}, 
publisher={London Mathematical Society},
 author={Beck, J.}, 
 year={1984},
 pages={33–41}
 }

\bib{Beck1984}{article}{
author = {Beck, J.},
journal = {Acta Arithmetica},
keywords = {uniformly distributed sequence; discrepancy; Borel measure; tilted box},
number = {2},
pages = {115-130},
title = {Some upper bounds in the theory of irregularities of distribution},
url = {http://eudml.org/doc/205894},
volume = {43},
year = {1984},
}

\bib{87e609866d654db2aa5e674048289af3}{book}{
title = {Irregularities of Distribution},
author = {J. Beck}
author = {W. Chen},
year = {1987},
isbn = {0521307929},
series = {Cambridge Tracts in Mathematics},
publisher = {Cambridge University Press},
address = {United Kingdom},
} 
 
\bib{BMOC2015energy}{article}{
    AUTHOR = {Beltr\'an, C. }AUTHOR = { Marzo, J. }AUTHOR = { Ortega-Cerd\`a, J.},
     TITLE = {Energy and discrepancy of rotationally invariant determinantal
              point processes in high dimensional spheres},
   JOURNAL = {J. Complexity},
  FJOURNAL = {Journal of Complexity},
    VOLUME = {37},
      YEAR = {2016},
     PAGES = {76--109},
      ISSN = {0885-064X},
}

\bib{EB18_3}{article}{
  title = {The Diamond ensemble: a constructive set of points with small logarithmic energy},
  author = {C. Beltr\'{a}n},
  author = {U. Etayo}
  journal = {Preprint arXiv:1809.09416},
  year = {2018}
}

\bib{BDM18}{article}{
author={D. Bilyk},
author={F. Dai},
author={R. Matzke},
title={The {S}tolarsky {P}rinciple and {E}nergy {O}ptimization on the {S}phere},
journal={Constructive Approximation},
year={2018},
month={Aug},
day={01},
volume={48},
number={1},
pages={31--60},
issn={1432-0940},
}

\bib{Bondarenko2015}{article}{
author={A. Bondarenko}
author={D. Radchenko}
author={M. Viazovska},
title={Well-Separated Spherical Designs},
journal={Constructive Approximation},
year={2015},
month={Feb},
day={01},
volume={41},
number={1},
pages={93--112}
}

\bib{BL88}{article}{
author={J. Bourgain}
author={J. Lindenstrauss},
title={Distribution of points on spheres and approximation by zonotopes},
journal={J. Israel J. Math.},
year={1988},
volume={64},
number={1},
pages={25--31},
}

\bib{BD13}{article}{
title = {A simple proof of {S}tolarsky's invariance principle},
author = {J. Brauchart}
author = {J. Dick},
year = {2013},
volume = {141},
pages = {2085--2096},
journal = {Proceedings of the American Mathematical Society},
issn = {0002-9939},
publisher = {American Mathematical Society},
number = {6}
}

\bib{Brauchart2015293}{article}{
    AUTHOR = {Brauchart, J. S.}
    AUTHOR = {Grabner, P. J.},
     TITLE = {Distributing many points on spheres: minimal energy and designs},
   JOURNAL = {J. Complexity},
  FJOURNAL = {Journal of Complexity},
    VOLUME = {31},
      YEAR = {2015},
    NUMBER = {3},
     PAGES = {293--326},
      ISSN = {0885-064X},
}

\bib{BHS2012b}{article}{
    AUTHOR = {Brauchart, J. S. }
    AUTHOR = { Hardin, D. P. }
    AUTHOR = { Saff, E. B.},
     TITLE = {The next-order term for optimal {R}iesz and logarithmic energy asymptotics on the sphere},
 	BOOKTITLE = {Recent advances in orthogonal polynomials, special functions, and their applications},
    SERIES = {Contemp. Math.},
    VOLUME = {578},
     PAGES = {31--61},
 PUBLISHER = {Amer. Math. Soc., Providence, RI},
      YEAR = {2012},
}

\bib{Dragnev}{article}{
  title={On the Separation of Logarithmic Points on the Sphere},
  journal={Approximation Theory},
  volume={X},
  pages={137-144}
  author={P. Dragnev},
  year={2002}
}

\bib{Gotz2000}{article}{
author={G{\"o}tz, M.},
title={On the Distribution of Weighted Extremal Points on a Surface in $\mathbb{R}^d ,d \geq 3$},
journal={Potential Analysis},
year={2000},
month={Dec},
day={01},
volume={13},
number={4},
pages={345--359}
}

\bib{dolomites}{article}{
    AUTHOR = {Hardin, D. P. }
    AUTHOR = {Michaels, T.}
    AUTHOR = {Saff, E. B.},
     TITLE = {A Comparison of Popular Point Configurations on $\mathbb{S}^{2}$},
    SERIES = {Contemp. Math.},
	journal = {Dolomites Research Notes on Approximation},
    VOLUME = {9},
	number = {1},
	publisher = {Padova University Press},
	pages = {16--49},
	year={2016},
}   
   
\bib{KN12}{book}{
  title={Uniform {D}istribution of {S}equences},
  author={Kuipers, L.}
  author={Niederreiter, H.},
  isbn={9780486149998},
  series={Dover Books on Mathematics},
  url={https://books.google.es/books?id=mnY8LpyXHM0C},
  year={2012},
  publisher={Dover Publications}
}

\bib{10.2307/117605}{article}{
    AUTHOR = {Kuijlaars, A. B. J. }
    AUTHOR = { Saff, E. B.},
     TITLE = {Asymptotics for minimal discrete energy on the sphere},
   JOURNAL = {Trans. Amer. Math. Soc.},
  FJOURNAL = {Transactions of the American Mathematical Society},
    VOLUME = {350},
      YEAR = {1998},
    NUMBER = {2},
     PAGES = {523--538},
      ISSN = {0002-9947},
   MRCLASS = {11K41 (78A30)},
  MRNUMBER = {1458327},
MRREVIEWER = {M. Lawrence Glasser},
}

\bib{MM2019}{book}{
	AUTHOR = {J. Marzo},
	AUTHOR = {A. Mas},
	TITLE = {Discrepancy of minimal Riesz energy points},
	YEAR = {2019}
	PUBLISHER = {https://arxiv.org/abs/1907.04814},
}

\bib{Leopardi}{book}{
	AUTHOR = {P. Leopardi},
	TITLE = {Distributing points on the sphere: Partitions, separation, quadrature and energy },
	YEAR = {2007}
	PUBLISHER = {Thesis (PH.D.)--University of New South Wales},
}

\bib{Leopardi2}{article}{
author={Leopardi, Paul},
title={Discrepancy, separation and Riesz energy of finite point sets on the unit sphere},
journal={Advances in Computational Mathematics},
year={2013},
month={Jul},
day={01},
volume={39},
number={1},
pages={27--43},
}

\bib{MR1306011}{article}{
    AUTHOR = {Rakhmanov, E. A. }
    AUTHOR = { Saff, E. B. }
    AUTHOR = { Zhou, Y. M.},
     TITLE = {Minimal discrete energy on the sphere},
   JOURNAL = {Math. Res. Lett.},
  FJOURNAL = {Mathematical Research Letters},
    VOLUME = {1},
      YEAR = {1994},
    NUMBER = {6},
     PAGES = {647--662},
      ISSN = {1073-2780},
}

\bib{doi:10.1002/rsa.10036}{article}{
author = {U. Feige}
author = {G. Schechtman},
title = {On the optimality of the random hyperplane rounding technique for MAX CUT},
journal = {Random Structures \& Algorithms},
volume = {20},
number = {3},
pages = {403-440},
}

\bib{Smale1998}{article}{
author={Smale, S.},
title={Mathematical problems for the next century},
journal={The Mathematical Intelligencer},
year={1998},
month={Mar},
day={01},
volume={20},
number={2},
pages={7--15},
issn={0343-6993},
doi={10.1007/BF03025291},
}

\bib{10.2307/2039137}{article}{
 ISSN = {00029939, 10886826},
 URL = {http://www.jstor.org/stable/2039137},
 author = {K. B. Stolarsky},
 journal = {Proceedings of the American Mathematical Society},
 number = {2},
 pages = {575--582},
 publisher = {American Mathematical Society},
 title = {Sums of Distances Between Points on a Sphere. II},
 volume = {41},
 year = {1973}
}

\bib{Zhou}{book}{
	AUTHOR = {Y. Zhou},
	TITLE = {Arrangements of Points on the Sphere},
	SERIES = {Graduate Texts in Mathematics},
	VOLUME = {56-08},
	PUBLISHER = {Thesis (PH.D.)--University of south Florida},
	YEAR = {1995}
}

\end{biblist}
\end{bibdiv}

%------------------------------------------------------------
%------------------------------------------------------------
%------------------------------------------------------------
%------------------------------------------------------------

\end{document}